\newcommand{\otherlabel}[2]{\protected@edef\@currentlabel{#2}\label{#1}}
\patchcmd{\epigraph}{\@epitext{#1}}{\itshape\@epitext{#1}}{}{}
\tikzset{->, >=latex,  node distance=2cm,  every state/.style={thick, fill=gray!10},  initial text=~~$ $}
\title[Automating stable rank computation]{{Automating the stable rank computation for special biserial algebras}\vspace{-6mm}}
\author[Srivastava and Kuber]{Suyash Srivastava and Amit Kuber}
\address{Department of Mathematics and Statistics\\Indian Institute of Technology, Kanpur\\ Uttar Pradesh, India}
\email{suyashsriv20@gmail.com, askuber@iitk.ac.in}
\keywords{special biserial algebra, radical of the module category, automaton, hammock, word problem}
\subjclass[2020]{16S90, 68Q45, 06A05, 16G20}
\begin{document}
%%
%% This is file `dirtree.tex',
%% generated with the docstrip utility.
%%
%% The original source files were:
%%
%% dirtree.dtx  (with options: `tex')
%% 
%% IMPORTANT NOTICE:
%% 
%% For the copyright see the source file.
%% 
%% Any modified versions of this file must be renamed
%% with new filenames distinct from dirtree.tex.
%% 
%% For distribution of the original source see the terms
%% for copying and modification in the file dirtree.dtx.
%% 
%% This generated file may be distributed as long as the
%% original source files, as listed above, are part of the
%% same distribution. (The sources need not necessarily be
%% in the same archive or directory.)
%%
%% Package `dirtree.dtx'
%% -----------------------------------------------
%% Copyright (C) 2004-2009 Jean-C\^ome Charpentier
%% -----------------------------------------------
%%
%% This work may be distributed and/or modified under the
%% conditions of the LaTeX Project Public License, either version 1.3
%% of this license or (at your option) any later version.
%% The latest version of this license is in
%%   http://www.latex-project.org/lppl.txt
%% and version 1.3 or later is part of all distributions of LaTeX
%% version 2003/12/01 or later.
%%
%% See CTAN archives in directory macros/latex/base/lppl.txt.
%%
%% CONTENTS:
%%   This work consists of the files dirtree.ins and dirtree.dtx.
%%   Derived files are dirtree.tex and dirtree.sty.
%%
%% DESCRIPTION:
%%   dirtree is a package displaying directory trees.
%%
 \def\fileversion{0.32}
 \def\filedate{2012/12/11}
\message{`dirtree' v\fileversion, \filedate\space (jcc)}
\edef\DTAtCode{\the\catcode`\@}
\catcode`\@=11
\long\def\DT@loop#1\DT@repeat{%
  \def\DT@iterate{#1\relax\expandafter\DT@iterate\fi}%
  \DT@iterate
  \let\DT@iterate\relax
}
\let\DT@repeat=\fi
\expandafter\ifx\csname DT@fromsty\endcsname\relax
  \def\@namedef#1{\expandafter\def\csname #1\endcsname}
  \def\@nameuse#1{\csname #1\endcsname}
  \long\def\@gobble#1{}
\fi
\def\@nameedef#1{\expandafter\edef\csname #1\endcsname}
\newdimen\DT@offset \DT@offset=0.2em
\newdimen\DT@width \DT@width=1em
\newdimen\DT@sep \DT@sep=0.2em
\newdimen\DT@all
\DT@all=\DT@offset
\advance\DT@all \DT@width
\advance\DT@all \DT@sep
\newdimen\DT@rulewidth \DT@rulewidth=0.4pt
\newdimen\DT@dotwidth \DT@dotwidth=1.6pt
\newdimen\DTbaselineskip \DTbaselineskip=\baselineskip
\newcount\DT@counti
\newcount\DT@countii
\newcount\DT@countiii
\newcount\DT@countiv
\def\DTsetlength#1#2#3#4#5{%
  \DT@offset=#1\relax
  \DT@width=#2\relax
  \DT@sep=#3\relax
  \DT@all=\DT@offset
  \advance\DT@all by\DT@width
  \advance\DT@all by\DT@sep
  \DT@rulewidth=#4\relax
  \DT@dotwidth=#5\relax
}
\expandafter\ifx\csname DT@fromsty\endcsname\relax
  \def\DTstyle{\tt}
  \def\DTstylecomment{\rm}
\else
  \def\DTstyle{\ttfamily}
  \def\DTstylecomment{\rmfamily}
\fi
\def\DTcomment#1{%
  \kern\parindent\dotfill
  {\DTstylecomment{#1}}%
}
\newdimen\DT@indent
\newdimen\DT@parskip
\newdimen\DT@baselineskip
\def\dirtree#1{%
  \DT@indent=\parindent
  \parindent=\z@
  \DT@parskip=\parskip
  \parskip=\z@
  \DT@baselineskip=\baselineskip
  \baselineskip=\DTbaselineskip
  \let\DT@strut=\strut
  \def\strut{\vrule width\z@ height0.7\baselineskip depth0.3\baselineskip}%
  \DT@counti=\z@
  \let\next\DT@readarg
  \next#1\@nil
  \dimen\z@=\hsize
  \advance\dimen\z@ -\DT@offset
  \advance\dimen\z@ -\DT@width
  \setbox\z@=\hbox to\dimen\z@{%
    \hsize=\dimen\z@
    \vbox{\@nameuse{DT@body@1}}%
  }%
  \dimen\z@=\ht\z@
  \advance\dimen0 by\dp\z@
  \advance\dimen0 by-0.7\baselineskip
  \ht\z@=0.7\baselineskip
  \dp\z@=\dimen\z@
  \par\leavevmode
  \kern\DT@offset
  \kern\DT@width
  \box\z@
  \endgraf
  \DT@countii=\@ne
  \DT@countiii=\z@
  \dimen3=\dimen\z@
  \@namedef{DT@lastlevel@1}{-0.7\baselineskip}%
  \loop
  \ifnum\DT@countii<\DT@counti
    \advance\DT@countii \@ne
    \advance\DT@countiii \@ne
    \dimen\z@=\@nameuse{DT@level@\the\DT@countii}\DT@all
    \advance\dimen\z@ by\DT@offset
    \advance\dimen\z@ by-\DT@all
    \leavevmode
    \kern\dimen\z@
    \DT@countiv=\DT@countii
    \count@=\z@
    \DT@loop
      \advance\DT@countiv \m@ne
      \ifnum\@nameuse{DT@level@\the\DT@countiv} >
        \@nameuse{DT@level@\the\DT@countii}\relax
      \else
        \count@=\@ne
      \fi
    \ifnum\count@=\z@
    \DT@repeat
    \edef\DT@hsize{\the\hsize}%
    \count@=\@nameuse{DT@level@\the\DT@countii}\relax
    \dimen\z@=\count@\DT@all
    \advance\hsize by-\dimen\z@
    \setbox\z@=\vbox{\@nameuse{DT@body@\the\DT@countii}}%
    \hsize=\DT@hsize
    \dimen\z@=\ht\z@
    \advance\dimen\z@ by\dp\z@
    \advance\dimen\z@ by-0.7\baselineskip
    \ht\z@=0.7\baselineskip
    \dp\z@=\dimen\z@
    \@nameedef{DT@lastlevel@\the\DT@countii}{\the\dimen3}%
    \advance\dimen3 by\dimen\z@
    \advance\dimen3 by0.7\baselineskip
    \dimen\z@=\@nameuse{DT@lastlevel@\the\DT@countii}\relax
    \advance\dimen\z@ by-\@nameuse{DT@lastlevel@\the\DT@countiv}\relax
    \advance\dimen\z@ by0.3\baselineskip
    \ifnum\@nameuse{DT@level@\the\DT@countiv} <
        \@nameuse{DT@level@\the\DT@countii}\relax
      \advance\dimen\z@ by-0.5\baselineskip
    \fi
    \kern-0.5\DT@rulewidth
    \hbox{\vbox to\z@{\vss\hrule width\DT@rulewidth height\dimen\z@}}%
    \kern-0.5\DT@rulewidth
    \kern-0.5\DT@dotwidth
    \vrule width\DT@dotwidth height0.5\DT@dotwidth depth0.5\DT@dotwidth
    \kern-0.5\DT@dotwidth
    \vrule width\DT@width height0.5\DT@rulewidth depth0.5\DT@rulewidth
    \kern\DT@sep
    \box\z@
    \endgraf
  \repeat
  \parindent=\DT@indent
  \parskip=\DT@parskip
  \baselineskip=\DT@baselineskip
  \let\strut\DT@strut
}
\def\DT@readarg.#1 #2. #3\@nil{%
  \advance\DT@counti \@ne
  \@namedef{DT@level@\the\DT@counti}{#1}%
  \@namedef{DT@body@\the\DT@counti}{\strut{\DTstyle{#2}\strut}}%
  \ifx\relax#3\relax
    \let\next\@gobble
  \fi
  \next#3\@nil
}
\catcode`\@=\DTAtCode\relax
% \endinput
%%
%% End of file `dirtree.tex'.

\newtheorem{defn}{Definition}

\newtheorem{manualthm}{Theorem}
\setlength{\fboxsep}{1pt}
\setlength{\fboxrule}{1pt}
\newtheorem{innercustomthm}{Theorem}
\newenvironment{customthm}[1]
  {\renewcommand\theinnercustomthm{\fbox{#1}}\innercustomthm}
  {\endinnercustomthm}
  \newtheorem{innercustomconsthm}{Construction and Theorem}
\newenvironment{customconsthm}[1]
  {\renewcommand\theinnercustomconsthm{\fbox{#1}}\innercustomconsthm}
  {\endinnercustomconsthm}
  
\newtheorem{innercustomlem}{Lemma}
\newenvironment{customlem}[1]
  {\renewcommand\theinnercustomlem{\fbox{#1}}\innercustomlem}
  {\endinnercustomlem}
  
\newtheorem{innercustomcons}{Construction}
\newenvironment{customcons}[1]
  {\renewcommand\theinnercustomcons{\fbox{#1}}\innercustomcons}
  {\endinnercustomcons}
  \newtheorem{innercustomcor}{Corollary}
\newenvironment{customcor}[1]
  {\renewcommand\theinnercustomcor{\fbox{#1}}\innercustomcor}
  {\endinnercustomcor}
  
\newtheorem{innercustomdefn}{Definition}
\newenvironment{customdefn}[1]
  {\renewcommand\theinnercustomdefn{\fbox{#1}}\innercustomdefn}
  {\endinnercustomdefn}
  
\newtheorem{definitions}[defn]{Definitions}

\newtheorem{thm}{Theorem}
\newtheorem{lem}[thm]{Lemma}
\newtheorem{construction}[defn]{Construction}
\newtheorem{prop}[thm]{Proposition}
\newtheorem*{prop*}{Proposition}
\newtheorem{cor}[thm]{Corollary}
\newtheorem{conj}[thm]{Conjecture}

\newtheorem{ques}[thm]{Question}
\newtheorem{claim}[thm]{Claim}
\newtheorem*{claim*}{Claim}
\newtheorem{algo}[defn]{Algorithm}
\theoremstyle{remark}
\newtheorem{rem}[defn]{Remark}
\theoremstyle{remark}
\newtheorem{remarks}[defn]{Remarks}
\theoremstyle{remark}
\newtheorem{notation}[defn]{Notation}
\theoremstyle{remark}
\newtheorem{exmp}[defn]{Example}
\theoremstyle{remark}
\newtheorem{examples}[defn]{Examples}
\theoremstyle{remark}
\newtheorem{dgram}[defn]{Diagram}
\theoremstyle{remark}
\newtheorem{fact}[defn]{Fact}
\theoremstyle{remark}
\newtheorem{illust}[defn]{Illustration}
\theoremstyle{remark}
\newtheorem{que}[defn]{Question}
\numberwithin{equation}{section}
\newtheorem{example}[defn]{Example}
\newtheorem{exercise}[defn]{Exercise}

\def\abs#1{\lvert#1\rvert}
\def\MSB{MSB }
\def\pivot{\mathsf{pivot}}
\def\tred{\textcolor{red}}
\def\dim{\mathsf{dim}}
\def\MS{\mathsf{MS}}
\def\res{\upharpoonright}
\def\tensor{\bigotimes}
\def\defeq{\vcentcolon=}
\def\meet{\wedge}
\def\join{\vee}
\def\dLOfpb#1#2{\mathrm{dLO}_\mathrm{fp}^{{#1}{#2}}}
\def\corner{F}
\def\less{\prec}
\def\frame#1{\begin{mdframed}#1\end{mdframed}}
\def\hdammock{$\Tilde{h}$ammock }
\def\bb{\mathfrak{b}}
\renewcommand{\labelitemii}{$ \circ $}
\def\sd{sd}
\def\eqdef{=\vcentcolon}
\def\upset{\uparrow}
\def\downset{\downarrow}
\def\length{\mathsf{length}}
\def\gray{\textcolor{gray}}
\def\teal{\textcolor{teal}}
\def\lime{\textcolor{lime}}
\def\magenta{\textcolor{magenta}}
\def\orange{\textcolor{orange}}
\newcommand{\Il}{\pi_l(\ii)}
\def\tblue{\textcolor{blue}}
\newcommand{\Ir}{\pi_r(\ii)}
\newcommand{\htIl}{\htpi_l(\ii)}
\newcommand{\htIr}{\htpi_r(\ii)}
\newcommand{\STT}{\mathsf{long}}
\newcommand{\smol}{\mathsf{short}}
\newcommand{\beeg}{\mathsf{long}}
\newcommand{\eqvl}{\equiv_H^l}
\newcommand{\eqvr}{\equiv_H^r}
\newcommand{\eqvlr}{\equiv_H^{lr}}
\newcommand{\eqv}{\equiv_{H}}

\newcommand{\Lo}{\mathsf{L^0}}
\newcommand{\Lp}{\mathsf{L^+}}
\newcommand{\Lm}{\mathsf{L^-}}
\newcommand{\sfL}{\mathsf{L}}
\newcommand{\Ro}{\mathsf{R^0}}
\newcommand{\Rp}{\mathsf{R^+}}
\newcommand{\Rm}{\mathsf{R^-}}
\newcommand{\sfR}{\mathsf{R}}

\newcommand{\DL}{\Delta_\sfL}
\newcommand{\DR}{\Delta_\sfR}

\newcommand{\hb}{\widebar{H}}
\newcommand{\hht}{\widehat{H}}
\newcommand{\hd}{\widetilde H}
\newcommand{\Hb}{\overline{\mathcal H}}
\newcommand{\Hht}{\widehat{\mathcal H}}
\newcommand{\Hd}{\widetilde{\mathcal H}}
\newcommand{\infb}{\,^\infty\bb}
\newcommand{\binf}{\,\bb^\infty}
\newcommand{\qb}{\mathsf Q^\mathsf {Ba}}
\newcommand{\suc}{\mathfrak{succ}}
\newcommand{\pred}{\mathfrak{pred}}
\newcommand\A{\mathcal{A}}
\newcommand\B{\mathsf{B}}
\newcommand\BB{\mathcal{B}}
\newcommand\C{\mathcal{C}}
\newcommand\Pp{\mathcal{P}}
\newcommand\D{\mathcal{D}}
\newcommand\Hamm{\hat{H}}
\newcommand\hh{\mathfrak{h}}
\newcommand\HH{\mathcal{H}}
\newcommand\RR{\mathcal{R}}
\newcommand\Red[1]{\mathrm{R}_{#1}}
\newcommand\HRed[1]{\mathrm{HR}_{#1}}
\newcommand\K{\mathcal{K}}
\newcommand\LL{\mathcal{L}}
\newcommand\Lim{\text{{\normalfont Lim}}}
\newcommand\M{\mathcal{M}}
\newcommand\SD{\mathcal{SD}}
\newcommand\MD{\mathcal{MD}}
\newcommand\SMD{\mathcal{SMD}}
\newcommand\T{\mathcal{T}}
\newcommand\TT{\mathfrak T}
\newcommand\ii{\mathcal I}
\newcommand\UU{\mathcal{U}}
\newcommand\VV{\mathcal{V}}
\newcommand\ZZ{\mathcal{Z}}
%notations for rings of numbers

\newcommand\Q{\mathbb{Q}}
\newcommand{\N}{\mathbb{N}} 
\newcommand{\R}{\mathbb{R}}
\newcommand{\Z}{\mathbb{Z}}
\newcommand{\qq}{\mathfrak q}
\newcommand{\ch}{\circ_H}
\newcommand{\cg}{\circ_G}
\newcommand{\bua}[1]{\mathfrak b^{\alpha}(#1)}
\newcommand{\falpha}{{\mathfrak{f}\alpha}}
\newcommand{\fgamma}{\gamma^{\mathfrak f}}
\newcommand{\fbeta}{{\mathfrak{f}\beta}}
\newcommand{\bub}[1]{\mathfrak b^{\beta}(#1)}
\newcommand{\bla}[1]{\mathfrak b_{\alpha}(#1)}
\newcommand{\blb}[1]{\mathfrak b_{\beta}(#1)}
\newcommand{\lmin}{\lambda^{\mathrm{min}}}
\newcommand{\lmax}{\lambda^{\mathrm{max}}}
\newcommand{\xmin}{\xi^{\mathrm{min}}}
\newcommand{\xmax}{\xi^{\mathrm{max}}}
\newcommand{\lbmin}{\bar\lambda^{\mathrm{min}}}
\newcommand{\lbmax}{\bar\lambda^{\mathrm{max}}}
\newcommand{\ff}{\mathfrak f}%\pp
\newcommand{\cc}{\mathfrak c}%\yy
\newcommand{\dd}{\mathfrak d}%\zz
\newcommand{\sqsf}{\sqsubset^\ff}
\newcommand{\rr}{\mathfrak r}
\newcommand{\pp}{\mathfrak p}
\newcommand{\uu}{\mathfrak u}
\newcommand{\vv}{\mathfrak v}
\newcommand{\ww}{\mathfrak w}
\newcommand{\xx}{\mathfrak x}
\newcommand{\yy}{\mathfrak y}
\newcommand{\zz}{\mathfrak z}
\newcommand{\MM}{\mathfrak M}
\newcommand{\mm}{\mathfrak m}
\newcommand{\sbq}{\mathfrak s}
\newcommand{\tbq}{\mathfrak t}
\newcommand{\Spec}{\mathbf{Spec}}
\newcommand{\Br}{\mathbf{Br}}
\newcommand{\sk}[1]{\{#1\}}
\newcommand{\Prime}{\mathbf{Pr}}
\newcommand{\Parent}{\mathbf{Parent}}
\newcommand{\Uncle}{\mathbf{Uncle}}
\newcommand{\Cousin}{\mathbf{Cousin}}
\def\sgn{\mathrm{sgn}}

\newcommand{\Nephew}{\mathbf{Nephew}}
\newcommand{\Sibling}{\mathbf{Sibling}}
\newcommand{\uc}{\mathrm{uc}}
\newcommand{\MCP}{\mathrm{MCP}}
\newcommand{\MSCP}{\mathrm{MSCP}}
\newcommand{\TTT}{\widetilde{\T}}
%Notations for terms
\newcommand{\la}{l}
\newcommand{\ra}{r}
\newcommand{\lb}{\bar{l}}
\newcommand{\rb}{\bar{r}}
\newcommand{\tBa}{\varepsilon^{\mathrm{Ba}}}
\def\ii{\mathcal{I}}
\newcommand{\brac}[2]{\langle #1,#2\rangle}
\newcommand{\braket}[3]{\langle #1\mid #2:#3\rangle}
\newcommand{\fin}{fin}
\newcommand{\inff}{inf}
%Notation for Ziegler spectrum, quiver etc.
\newcommand{\Zg}{\mathrm{Zg}(\Lambda)}
\newcommand{\Zgs}{\mathrm{Zg_{str}}(\Lambda)}
\newcommand{\STR}[1]{\mathrm{Str}(#1)}
\newcommand{\dmod}{\mbox{-}\operatorname{mod}}
\newcommand{\HQ}{\mathcal{HQ}^\mathrm{Ba}}
\newcommand{\bHQ}{\overline{\mathcal{HQ}}^\mathrm{Ba}}
\newcommand\Af{\mathcal{A}^{\ff}}
\newcommand\AAf{\bar{\mathcal{A}}^{\ff}}
\newcommand\Hf{\mathcal{H}^{\ff}}
\newcommand\Rf{\mathcal{R}^{\ff}}
\newcommand\Tf{\T^{\ff}}
\newcommand\Uf{\mathcal{U}^{\ff}}
\newcommand\Sf{\mathcal{S}^{\ff}}
\newcommand\Vf{\mathcal{V}^{\ff}}
\newcommand\Zf{\mathcal{Z}^{\ff}}
\newcommand\bVf{\overline{\mathcal{V}}^{\ff}}
\newcommand\bTf{\overline{\mathcal{T}}^{\ff}}
\newcommand{\fmin}{\xi^{\mathrm{fmin}}}
\newcommand{\fmax}{\xi^{\mathrm{fmax}}}
\newcommand{\xif}{\xi^\ff}
\newcommand{\mycomment}[1]{}
\newcommand{\LOfp}{\mathrm{LO}_{\mathrm{fp}}}

\newcommand\Tl{\mathbf{T}}
\newcommand\Tla{\mathbf{T}_{\la}}
\newcommand\Tlb{\mathbf{T}_{\lb}}
\newcommand\Ml{\mathbf{M}}
\newcommand\Mla{\mathbf{M}_{\la}}
\newcommand\Mlb{\mathbf{M}_{\lb}}
\newcommand\OT{\mathcal{O}}
\newcommand\LO{\mathbf{LO}}

\def\tblue{\textcolor{blue}}

\newcommand\rad{\mathrm{rad}_\Lambda} 

\newcommand{\fork}[1]{\mathrm{Str}_{\text{Fork}}^{\la}(#1)}
%Notations
\newcommand{\BaB}{\mathsf{Ba(B)}}
\newcommand{\CycB}{\mathsf{Cyc(B)}}
\newcommand{\ExtB}{\mathsf{Ext(B)}}
\newcommand{\St}{\mathsf{St}}
\def\short{\mathsf{short}}
\newcommand{\StB}[1]{\mathsf{St}_{#1}(\sB)}
\newcommand{\STB}[1]{\mathsf{St}_{#1}(\xx_0,i;\sB)}
\newcommand{\BST}[1]{\mathsf{BSt}_{#1}(\xx_0,i;\sB)}
\newcommand{\CSt}[1]{\mathsf{CSt}_{#1}(\sB)}
\newcommand{\CST}[1]{\mathsf{CSt}_{#1}(\xx_0,i;\sB)}
\newcommand{\ASt}[1]{\mathsf{ASt}_{#1}(\sB)}

\newcommand{\OSt}[1]{\overline{\mathsf{St}}_{#1}(\sB)}
\newcommand{\OST}[1]{\overline{\mathsf{St}}_{#1}(\xx_0,i;\sB)}
\newcommand{\lB}{\ell_{\sB}}
\newcommand{\lbB}{\overline{\ell}_{\sB}}
\newcommand{\LB}{l_{\sB}}
\newcommand{\LbB}{\overline{l}_{\sB}}

\newcommand{\BalB}{\mathsf{Ba}_l(\sB)}
\newcommand{\BalbB}{\mathsf{Ba}_{\lb}(\sB)}
\newcommand{\BarB}{\mathsf{Ba}_r(\sB)}
\newcommand{\BarbB}{\mathsf{Ba}_{\bar{r}}(\sB)}

\newcommand{\sB}{\mathsf{B}}
\newcommand{\Str}[1]{\mathrm{Str}_{#1}(\xx_0,i;\sB)}
\newcommand{\Strd}[1]{\mathrm{Str}'_{#1}(\xx_0,i;\sB)}
\newcommand{\Strdd}[1]{\mathrm{Str}''_{#1}(\xx_0,i;\sB)}
\newcommand{\Cent}{\mathrm{Cent}(\xx_0,i;\sB)}
\newcommand{\Start}{\mathrm{Start}(\xx_0,i;\sB)}
\newcommand{\End}{\mathrm{End}(\xx_0,i;\sB)}
\newcommand{\braclB}{\brac{1}{\lB}}
\newcommand{\braclbB}{\brac{1}{\lbB}}
\newcommand{\QBa}{\Q^{\mathrm{Ba}}}
\newcommand{\Ba}{\mathcal Q_0^\mathrm{Ba}}
\newcommand{\Cyc}{\mathsf{Cyc}(\Lambda)}
\newcommand{\lazy}{1_{(v, i)}}
\newcommand{\dLOfd}{\mathrm{dLO_{fd}}}
\newcommand{\dLOfdb}[2]{\mathrm{dLO}_{\mathrm{fd}}^{{#1}{#2}}}

\newcommand{\LOfd}{\mathrm{LO}_{\mathrm{fd}}}
\newcommand{\Balr}{((\BalB\cap \BarB) \cup (\BalbB \cap \BarbB))}
\newcommand{\Hbar}{\overline{H}_l^i(\xx_0)}
\newcommand{\Hhat}{\widehat{H}_l^i(\xx_0)}
\newcommand{\HOST}[1]{\widehat{\overline{\mathsf{St}}}_{#1}(\xx_0,i;\sB)}
\newcommand{\HHlix}{\widehat{H}_l^i(\xx_0)}

\def\lan{\langle}
\def\ran{\rangle}
\newcommand{\s}[1]{{\mathsf{#1}}}
\newcommand{\f}[1]{\mathfrak {#1}}
\newcommand{\ol}[1]{\overline{#1}}

\newcommand{\gst}{\s{St}}
\newcommand{\stbar}{\ol\gst}
\mathchardef\mh="2D % Define a "math hyphen"
\newcommand{\allst}{(\le\omega)\mh\s{St}(\s B)}
\def\da{\Downarrow}
\def\ua{\Uparrow}
\def\dotplus{\cdot+}
\def\rad{\text{rad}}
\def\comp{\circ}
\def\aa{\mathfrak a}
\def\to{\rightarrow}
\def\HR{\textsf{HR}}
\def\parens#1{{(#1)}}
\def\mbf{\mathbf}
\newcommand{\rank}{\mathsf{rank}}
\newcommand{\mfa}{\mathfrak{a}}
\def\lin{\mathbf{L}}
\def\two{\mathbbm{2}}

\makeatletter
\let\save@mathaccent\mathaccent
\newcommand*\if@single[3]{%
  \setbox0\hbox{${\mathaccent"0362{#1}}^H$}%
  \setbox2\hbox{${\mathaccent"0362{\kern0pt#1}}^H$}%
  \ifdim\ht0=\ht2 #3\else #2\fi
  }
%The bar will be moved to the right by a half of \macc@kerna, which is computed by amsmath:
\newcommand*\rel@kern[1]{\kern#1\dimexpr\macc@kerna}
%If there's a superscript following the bar, then no negative kern may follow the bar;
%an additional {} makes sure that the superscript is high enough in this case:
\newcommand*\widebar[1]{\@ifnextchar^{{\wide@bar{#1}{0}}}{\wide@bar{#1}{1}}}
%Use a separate algorithm for single symbols:
\newcommand*\wide@bar[2]{\if@single{#1}{\wide@bar@{#1}{#2}{1}}{\wide@bar@{#1}{#2}{2}}}
\newcommand*\wide@bar@[3]{%
  \begingroup
  \def\mathaccent##1##2{%
%Enable nesting of accents:
    \let\mathaccent\save@mathaccent
%If there's more than a single symbol, use the first character instead (see below):
    \if#32 \let\macc@nucleus\first@char \fi
%Determine the italic correction:
    \setbox\z@\hbox{$\macc@style{\macc@nucleus}_{}$}%
    \setbox\tw@\hbox{$\macc@style{\macc@nucleus}{}_{}$}%
    \dimen@\wd\tw@
    \advance\dimen@-\wd\z@
%Now \dimen@ is the italic correction of the symbol.
    \divide\dimen@ 3
    \@tempdima\wd\tw@
    \advance\@tempdima-\scriptspace
%Now \@tempdima is the width of the symbol.
    \divide\@tempdima 10
    \advance\dimen@-\@tempdima
%Now \dimen@ = (italic correction / 3) - (Breite / 10)
    \ifdim\dimen@>\z@ \dimen@0pt\fi
%The bar will be shortened in the case \dimen@<0 !
    \rel@kern{0.6}\kern-\dimen@
    \if#31
      \overline{\rel@kern{-0.6}\kern\dimen@\macc@nucleus\rel@kern{0.4}\kern\dimen@}%
      \advance\dimen@0.4\dimexpr\macc@kerna
%Place the combined final kern (-\dimen@) if it is >0 or if a superscript follows:
      \let\final@kern#2%
      \ifdim\dimen@<\z@ \let\final@kern1\fi
      \if\final@kern1 \kern-\dimen@\fi
    \else
      \overline{\rel@kern{-0.6}\kern\dimen@#1}%
    \fi
  }%
  \macc@depth\@ne
  \let\math@bgroup\@empty \let\math@egroup\macc@set@skewchar
  \mathsurround\z@ \frozen@everymath{\mathgroup\macc@group\relax}%
  \macc@set@skewchar\relax
  \let\mathaccentV\macc@nested@a
%The following initialises \macc@kerna and calls \mathaccent:
  \if#31
    \macc@nested@a\relax111{#1}%
  \else
%If the argument consists of more than one symbol, and if the first token is
%a letter, use that letter for the computations:
    \def\gobble@till@marker##1\endmarker{}%
    \futurelet\first@char\gobble@till@marker#1\endmarker
    \ifcat\noexpand\first@char A\else
      \def\first@char{}%
    \fi
    \macc@nested@a\relax111{\first@char}%
  \fi
  \endgroup
}
\makeatother
\newcommand\test[1]{%
$#1{M}$ $#1{A}$ $#1{g}$ $#1{\beta}$ $#1{\mathcal A}^q$
$#1{AB}^\sigma$ $#1{H}^C$ $#1{\sin z}$ $#1{W}_n$}

\def\eps{\varepsilon}

\def\beps{\bm{\eps}}
\def\inorder{<_\two}
\def\emptyseq{\lan\ran}
\def\LOfa{\mathrm{LO_{fa}}}

\def\R{\mathbb{R}}
\def\diam{\diamondsuit}
\def\club{\clubsuit}
\def\P{\mathcal{P}}
\def\res{\upharpoonright}
\def\name{\mathring}
\def\A{\mathcal{A}}
\def\dom{\text{dom}}
\def\K{\mathcal{K}}
\def\Pbar{\overline{P}}
\def\bf{\mathbf}
\def\lessp{\leq^{pr}_}
\def\lessap{\leq^{apr}_}
\def\lessstar{\leq_{\star}}
\def\I{\bf{I}}

\def\vsk{\vspace{6pt}}
\def\cal{\mathcal}
\def\frak{\mathfrak}
\def\e{\varepsilon}
\def\club{\clubsuit}
\def\diam{\diamondsuit}
\def\Cohen{\textsf{Cohen}}
\def\Random{\textsf{Random}}
\def\cf{\textsf{cf}}
\def\dom{\textsf{dom}}
\def\rng{\textsf{range}}
\def\supp{\textsf{supp}}
\def\odim{\textsf{odim}}
\def\cont{\frak{c}}
\def\lt{\textsf{left}}
\def\rt{\textsf{right}}
\def\up{\textsf{up}}
\def\down{\textsf{down}}
\def\otp{\textsf{otp}}
\def\rk{\textrm{rk}}
\def\opt{\textsf{otp}}
\def\add{\textsf{add}}
\def\split{\textsf{Split}}
\def\Meager{\textsf{Meager}}
\def\Null{\textsf{Null}}
\mathchardef\mhyphen="2D
\def\blank{\mhyphen}
\def\deltahat{\hat{\delta}}
\def\call{\mathcal}
\def\red{\textcolor{red}}
\def\blue{\textcolor{blue}}
\def\LOfa{\mathrm{LO}_\mathrm{fa}}
\def\M{{M_{1b}}}
\def\m{{M_{1a}}}
\def\preorder{\text{$<_\textit{pre} $}}
\def\front{\mathrm{front}}
\def\Front{\mathbf{Front}}

\vspace{-1cm}
\def\st{\mathrm{st}}
\def\res{\upharpoonright}

\begin{abstract}
Given a special biserial algebra $\Lambda$ over an algebraically closed field, let $\mathrm{rad}_\Lambda$ denote the radical of its module category.
The authors showed with Sinha that the stable rank of a special biserial algebra $\Lambda$, i.e., the least ordinal $\gamma$ satisfying $\mathrm{rad}_\Lambda^\gamma=\mathrm{rad}_\Lambda^{\gamma+1}$, is strictly bounded above by $\omega^2$. 
We use finite automata to give simple algorithmic proofs, complete with their time complexity analyses, of two key ingredients in the proof of this result--the first one states that certain linear orders called hammocks associated with such algebras are \emph{finite description linear orders}, i.e., they lie in the smallest class of linear orders that contains finite linear orders and $\omega$, and that is closed under isomorphisms, order-reversals, binary sums, co-lexicographic products and finitary shuffles. We also document a complete proof of the result that the class of order types(=order-isomorphism classes) of finite description linear orders coincides with that of languages of finite automata under inorder.
\end{abstract}
\maketitle
\vspace{-6mm}
\section*{Introduction}
This paper contributes to the representation theory of special biserial algebras by using finite automata to compute their stable ranks, thus exploring a striking application of computer science apparatus in representation theory. The authors hope that this work, especially due to its expository aspects, will bridge the gap between these two fields and aid current and future representation theorists in applying these techniques in their work.
 
Fix an algebraically closed field $ \mathcal K $. The stable rank, $ \mathrm{st}(\Lambda) $, of a finite-dimensional associative $ \mathcal K $-algebra $ \Lambda $ is an ordinal-valued Morita invariant measuring the complexity of factorizations in the module category $\Lambda\text-\mathrm{mod}$ of finite length (equivalently, finite-dimensional) left modules over $\Lambda$. More specifically, it is the least ordinal at which the ordinal-indexed descending chain  \[\Lambda\text-\mathrm{mod}=\mathrm{rad}_\Lambda^0 \supseteq \mathrm{rad}_\Lambda^1 \supseteq \dots \supseteq \mathrm{rad}_\Lambda^\omega\supseteq \mathrm{rad}_\Lambda^{\omega+1} \supseteq\dots \] of transfinite-recursively defined ordinal powers of the radical $ \rad_\Lambda $ stabilizes, where $\mathrm{rad}_{\Lambda} $  is the two-sided ideal of the module category $ \Lambda\text-\mathrm{mod} $ generated by the non-invertible(=non-split) morphisms between indecomposables. The rank $ \rk(f) $ of any morphism $ f $ in $ \Lambda\text-\mathrm{mod} $ is defined to be the least ordinal $ \alpha $, if one exists, for which $ f \not\in \rad_\Lambda^{\alpha + 1} $; otherwise it is defined to be $\infty$.
 
A special biserial algebra over $ \mathcal K $ admits presentation as the quotient $ \mathcal K \mathcal Q / \lan \rho \ran $ of the path algebra generated by a finite connected quiver $ \mathcal Q $ by the two-sided ideal generated by a  finite set $ \rho $ of blocking relations, with the pair $ (\mathcal Q, \rho) $ satisfying certain local restrictions on the quiver (Definition \ref{defn: string algebra}). A strengthening of this definition, where $\rho$ consists only of paths, gives the definition of string algebras. The class of special biserial algebras and its subclass consisting of string algebras form testing grounds for various conjectures about the tame representation type due to the explicit combinatorial descriptions of their Auslander-Reiten quivers. The class of tame representation type algebras is further divided into the classes of domestic and non-domestic algebras respectively, each of which intersects both the classes of special biserial and string algebras.

The main result under consideration in this paper is the following.

\begin{customthm}{\textbf{Special}}\cite[Theorem~1]{SSK}
\label{thm: Special}
If $\Lambda$ is a special biserial $\mathcal K$-algebra, then $\st(\Lambda)<\omega^2$. In fact, there are natural numbers $n,d,e$ that depend of the bound quiver $(\mathcal Q,\rho)$ such that $0\leq\st(\Lambda)-(\omega\cdot n+d)\leq e$.
\end{customthm}

In his PhD thesis, Schr\"oer \cite{Schroer98hammocksfor} proved this result in the domestic case (also see \cite{schroer2000infinite}), and his approach was through the study of several interrelated partially ordered sets associated with each vertex $v$ of the quiver $Q$, called \emph{hammock posets}, denoted $\mathcal H(v) \supseteq\overline H(v)\supseteq H(v)$, and the two projections of $H(v)$, called \emph{hammock linear orders} and denoted $H_l(v)$ and $H_r(v)$, which encode factorizations of certain morphisms between indecomposables. Theorem \ref{thm: Special} was originally stated as \cite[Conjecture~4.4.1]{GKS20}, where it was proven for a subclass of non-domestic string algebras.

We need to establish some order-theoretic terminology before we can state a key ingredient in the proof of the result in general case. The class of order types(=order-isomorphism classes) of linear orders is equipped with some finitary operations, namely associative but non-commutative sum ($+$) and co-lexicographic product ($\cdot$), and, for each positive integer $n$, an $n$-ary shuffle operation ($\Xi$). The class of finite description linear orders is the smallest class of linear orders containing finite linear orders and $\omega$, the order type of the standard ordering on the natural numbers, and that is closed under isomorphisms, order-reversals as well as all these finitary operations. 
\begin{customthm}{$ {{\alpha}} $}
\label{thm: hammock computation}
\label{thm: alpha}\cite[Theorem~11.9]{SKSK}
    For any vertex $ v $ in a quiver presentation of a string algebra $ \Lambda $, the hammock linear orders $ H_l(v) $ and $ H_r(v) $ are finite description linear orders.
\end{customthm}
Recall that a poset is said to be \emph{scattered} if the ordered set of rationals cannot be embedded in it. In the domestic case, the above result was proven in \cite{SK}, where it was also shown that the hammocks are scattered.

All finite description linear orders have an associated isomorphism-invariant ordinal-valued \emph{density}, and the ordinal $\omega\cdot n+d$ can be computed using the densities of the hammock linear orders.

Now we discuss the error \emph{error term} $e$ in the statement of Theorem \ref{thm: Special}. A \emph{band point} is an element in $\overline H(v)\setminus H(v)$, and it is said to be  \emph{exceptional} if it lies in (the closure of) a scattered region in $\overline H(v)$. The error term $e$ is the total number of exceptional points in hammocks $\overline H(v)$ as $v$ varies over the set of vertices of $\mathcal Q$.

\begin{customthm}{E}\cite[Lemma~3.3.9]{SSK}
\label{thm: E}
 \label{thm: finitely many exceptional bands}
    For each vertex $v$, the number of exceptional points in $\overline H(v)$ is finite.
\end{customthm}

The above two results allowed the use of some of Schr\"oer's arguments to the non-domestic string algebra case. The proofs of the two ingredient results make use of heavy combinatorial arguments. The first major contribution of this paper is simpler, automata-theoretic proofs of these two results. We also discuss the time-complexities of the relevant algorithms (Remarks \ref{complexity alpha} and \ref{rem: E}).

The first, and perhaps the only, person to document the applicability of automata to the representation theory of string algebras was Rees \cite{rees2008automata}.

\parbox{0.85\textwidth}{
\epigraph{{\begin{center} It is well known that the sets of strings that define all representations of
string algebras and many representations of other quotients of path algebras form
a regular set, and hence are defined by finite state automata.\end{center}~}}{---\textup{Sarah Rees}, \cite{rees2008automata}}}

\begin{figure}[!htbp]
    \centering
    \begin{minipage}{.5\textwidth}
        \centering
\begin{tikzpicture}[scale=0.6][block/.style={rectangle, draw, minimum width=2cm, minimum height=1cm, text centered}]
 \draw[-] (-1, 0) -- (10, 0);
\draw[thin, fill=lightgray] (0,0) rectangle (1,4) node[midway] {\textbf{L}};
\draw[thin, fill=lightgray] (-0.5,4) rectangle (4.9,5) node[midway] {\textbf{String}};
\draw[thin, fill=lightgray] (3.4,2) rectangle (6.6,4) node[midway] {\textbf{M}} ;
\draw[thin, fill=lightgray] (3.6,1) rectangle (4.5, 2) node[midway] {\textbf{FT}};
\draw[thin, fill=lightgray] (3.3,0) rectangle (4.8,1) node[midway] {\tblue{$ \pmb{ \alpha} $}};
\draw[thin, fill=lightgray] (5.1,0) rectangle (6.5,2) node[midway] {\tblue{\textbf{E}}};
\draw[thin, fill=lightgray] (8,0) rectangle (9,5) node[midway] {\textbf{R}};
\draw[thin, fill=lightgray] (1.5,5) rectangle (10,6) node[midway] {\textbf{Special}};
\end{tikzpicture}
    \caption{Dependency of results related to the stable rank computation}
    \label{fig: big picture}
    \end{minipage}%
    \begin{minipage}{0.5\textwidth}
        \centering
    \begin{tikzcd}[scale=0.5]
	A && B \\
	\\
	D && C
	\arrow["\alpha", from=1-1, to=1-3]
	\arrow["\beta"', curve={height=6pt}, from=1-1, to=3-1]
	\arrow["{\beta'}", curve={height=-6pt}, from=1-1, to=3-1]
	\arrow["\epsilon"', from=1-3, to=3-1]
	\arrow["\gamma", from=3-1, to=3-3]
	\arrow["\delta", from=3-3, to=1-3]
\end{tikzcd}
    \caption{A quiver with relation $ \alpha = \delta\gamma\beta' $ depicting inter-relations between the objects of study}
    \label{fig: heart of intro}
    \end{minipage}%
\end{figure} 
\renewcommand\DTstyle{\rmfamily}

Figures \ref{fig: big picture} and \ref{fig: heart of intro} depict the relationship between different components of the story. While the interdependence of various results used in the proof of Theorem \ref{thm: Special} is depicted in Figure \ref{fig: big picture}, the quiver in Figure \ref{fig: heart of intro} along with the relation in its caption describes constructions and results relating the new ideas involving automata theory as well as the organisation of sections of the paper.

Here is a brief description of various components of Figure \ref{fig: big picture}.
\dirtree{%
.1 \textbf{Special} denotes Theorem \ref{thm: Special}, i.e., the stable rank computation for special biserial algebras {.}.
.2 \textbf{R} is the connection between the representation theories of special biserial and string algebras \cite{skowronski1983representation, wald1985tame}{.}.
.2 \textbf{String} denotes the problem restricted to string algebras \cite[\S~3.4]{SSK}{.}.
.3 \textbf{L} represents the connection from \cite[{Pg.~66}]{Schroer98hammocksfor} between the rank of a graph map and the density of the associated interval in a hammock poset{.}.
.3 \textbf{M} is density computation of scattered intervals of hammock posets \cite[Theorem~3.3.3(4)]{SSK}{.}.
.4 \textbf{FT} is the result that hammock posets are of \emph{finite type}, i.e., have finitely many maximal \hphantom{1pt}scattered regions up to order-isomorphism{.} \cite[Theorem~3.3.3(2)]{SSK}.
.5 \textbf{\tblue{$ \pmb{\alpha} $}} is Theorem \ref{thm: alpha} \cite[Theorem~11.9]{SKSK}{.}.
.4 \textbf{\tblue{E}} is Theorem \ref{thm: E} \cite[Lemma~3.3.9]{SSK}{.}.
}

Let $ \Lambda = \mathcal K \mathcal Q/ \lan \rho \ran $ be a string algebra for the rest of the introduction. Rees \cite{rees2008automata} showed that the set of certain combinatorial entities known as \emph{strings}, that parametrizes certain indecomposable modules for $\Lambda$, is computable using a finite automaton over an alphabet comprising of the arrows of the quiver $ \mathcal Q$ along with their formal inverses (Theorem~\ref{thm: beta}). However, that paper did not mention hammocks or order relations between strings. This paper builds upon Rees' approach by showing that a modified automaton  captures hammock order relations between strings. 
We construct, for a vertex $ v $ in $ \mathcal Q $, an automaton (Construction~\ref{cons: beta'}) over the alphabet $ \two := \{0, 1\} $ which captures the hammock linear order $(H_l(v),<_l)$ (Theorem \ref{thm: beta'}).
This shows that hammock linear orders are computable using finite automata in the sense that they are order-isomorphic to \emph{linguages}(=languages under \emph{inorder}) of automata over the alphabet $ \two $. We then invoke a result due to Heilbr\"unner \cite{heilbrunner1980algorithm} asserting that any such linear order admits a finite description, thereby bypassing all the combinatorics in \cite{SKSK}.

It is indeed remarkable that the order types of finite description linear orders, whose class was first studied by L\"auchli and Leonard \cite{leonard1968elementary} while studying the model theory of linear orders, are exactly the order types of the linguages of finite automata (Theorem \ref{thm: automata and linear orders}). The second major contribution of the paper is to make the underlying automata theory more accessible to representation theorists, because it is our view that much potential exchange between these subjects is prevented by the very different languages and terminologies employed in the technical literature of these areas.

Now we discuss the organization of the rest of the paper through a brief description of vertices as well as arrows of the quiver in Figure~\ref{fig: heart of intro}. 

\noindent\textbf{$ A : $ String algebras:} In this section we recall the definition of string algebras and some basic results from their representation theory, including the definition of hammock posets.

\noindent\textbf{$ B : $ Finite description linear orders:} After recalling some preliminaries on linear orders and basic operations on them, we define the class of finite description linear orders.

\noindent\textbf{$ C : $ Word problems:} Word problems are essentially context-free grammars with a single production rule for every non-terminal symbol.
We list important results due to Courcelle \cite{courcelle1978frontiers} and Heilbr\"unner \cite{heilbrunner1973gleichungssysteme, heilbrunner1980algorithm} on the solutions to such problems. We also recall Courcelle's construction of a family of trees whose ``frontiers'' form the universal solutions to a given word problem (Construction \ref{cons: delta}, Theorem \ref{thm: delta}).

\noindent\textbf{$ D : $ Finite automata:} We start by recalling the basics of the theory of deterministic finite automata and then state  Rees' theorem  (Theorem~\ref{thm: beta}) on the existence of a finite automaton accepting only the strings for a string algebra.
We then describe a one-to-one correspondence  between order types of finite description linear orders and linguages of finite automata over a totally ordered alphabet (Theorem~\ref{thm: automata and linear orders}).

To prove the easier direction, we describe the construction of a finite automaton with linguage isomorphic to a given finite description linear order (Construction and Theorem~\ref{consthm: epsilon}).

For the converse, we start by proving that the linguage of any finite automaton arises as the linear order underlying the universal solution to a word problem (Construction \ref{cons: gamma}, Theorem \ref{cons: gamma}). The result then follows from Theorem \ref{thm: delta}.

\noindent\textbf{$ \beta' : $ Hammocks as linguages:} In this section, we construct, for a fixed vertex $ v $ in a string algebra $ \Lambda $, an automaton $ M'_\Lambda(v) $ with linguage isomorphic to $ H_l(v) $ (Construction \ref{cons: beta'}, Theorem \ref{thm: beta'}), which together with Theorems \ref{thm: gamma} and \ref{thm: delta} gives the promised automata-theoretic proof of Theorem \ref{thm: alpha}.

\noindent\textbf{$ E : $ Exceptional bands:} In this section, we give the promised automata-theoretic proof of Theorem~\ref{thm: E}, obtaining the final ingredient for stable rank computation \`a la \cite{SSK}.

Here are some notations and conventions. The set of natural numbers is denoted $\mathbb N$ and contains $0$. We identify any natural $ n $ with the set $ \{0, 1, \dots, n - 1\}$ of smaller naturals. Similarly, the set of rational numbers is denoted $ \mathbb Q $. We will often confuse a linear (resp. partial) order with its order type. For example, $ \mbf n $, $ \omega $, $ \omega^* $ and  $ \eta $ denote the order types of $ \{0, 1, \dots, n - 1\} $, the non-negative integers, the non-positive integers and the rationals under their usual order relations respectively.

\section*{$ A : $ String algebras}
\label{section: A}

In this section, we recall just as much of the theory of string algebras as is needed for the rest of the paper. For more detailed accounts, the interested reader can refer to \cite{assem2006elements} 
for finite-dimensional algebras/quiver representations and to \cite[\S~3]{SSK} for string algebras.

\begin{defn}
\label{defn: string algebra}

An algebra $ \Lambda $ presented as the quotient $ \mathcal K \mathcal Q/ \lan \rho \ran $ of the path algebra $ \mathcal K \mathcal Q $ generated by the quiver $ \mathcal Q = (Q_0, Q_1, s, t) $, having \begin{itemize}
\item $ Q_0 $ as its set of vertices,
\item $ Q_1 $ as its set of arrows, and
\item $ s, t : Q_1 \to Q_0 $ as the source and target maps respectively,
\end{itemize}
by its admissible ideal $ \lan \rho \ran $ generated by a finite set $ \rho $ of blocking relations given by paths in the quiver is said to be a \emph{string algebra} if the following conditions hold for every $ v \in Q_0 $ and $ b \in Q_1 $:

\begin{itemize}
    \item there are at most two arrows with source $ v $ and at most two arrows with target $ v $,
    \item there is at most one arrow $ a $ with $ ab \not\in \rho $ and at most one arrow $ c $ with $ bc \not\in \rho $.
\end{itemize}
\end{defn}
\begin{rem}
If we drop the condition that $ \rho $ consists of paths in the above definition, then we obtain the definition of a \emph{special biserial algebra}.
\end{rem}

For technical reasons we also choose and fix auxiliary maps $ \sigma, \tau : Q_1 \to \{-1, 1\} $ satisfying the following conditions for any two distinct syllables $\alpha, \beta \in Q_1 $:
\begin{enumerate}
\item  if $ s (\alpha ) = s(\beta) $, then $ \sigma (\alpha) = -\sigma (\beta) $;

\item  if $ t (\alpha) =t(\beta) $, then $ \tau(\alpha) = -\tau (\beta) $; and
\item If $ s(\alpha) = t(\beta) $ and $ \alpha\beta \not\in \rho $, then $ \sigma(\alpha) = -\tau(\beta) $.
\end{enumerate}
\begin{example}
\label{example: string algebra}
The string algebra $ GP_{2, 3} $ presented by the quiver and relations in Figure~\ref{fig: string algebra} was first studied by Gel'fand and Ponomarev. A choice of $ \sigma $ and $ \tau $ maps for this algebra is
$ \sigma (a) =\tau (b) = -1 $, $ \sigma (b) = \tau (a) = 1 $.
    \begin{figure}[htbp]
    \centering
    % https://q.uiver.app/#q=WzAsMSxbMCwwLCJ2Il0sWzAsMCwiYSIsMCx7ImFuZ2xlIjotOTB9XSxbMCwwLCJiIiwwLHsiYW5nbGUiOjkwfV1d
\begin{tikzcd}
	v
	\arrow["a", from=1-1, to=1-1, loop, in=145, out=215, distance=10mm]
	\arrow["b", from=1-1, to=1-1, loop, in=325, out=35, distance=10mm]
\end{tikzcd}
\caption{The quiver for the string algebra $ GP_{2, 3} $, for which $ \rho = \{a^2, b^3, ab, ba\} $}
    \label{fig: string algebra}
\end{figure}
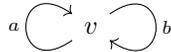
\end{example}
The arrows in $ Q_1 $ are called \emph{direct syllables} and are labelled by lowercase Roman letters, possibly with decoration. 
To each direct syllable $ \alpha $, we associate a formal \emph{inverse syllable} $ \alpha^- $, for which $ s(\alpha^-) := t(\alpha) $ and $ t(\alpha^-) := s(\alpha) $. If $ \alpha $ is the inverse of a direct syllable $ \beta $, then we define $ \alpha^- := \beta $. Inverse syllables form the set $ Q_1^- $. 

We now define combinatorial entities called \emph{strings} and \emph{bands} which yield much information about the representations of the string algebra $ \Lambda $. 

\begin{defn}[Strings and bands]
    A walk is either a zero-length walk of the form $1_{(v,i)}$ for $v\in Q_0, i\in\{1,-1\}$ or a word $ \xx = \alpha_1 \alpha_2 \dots \alpha_n $ over the alphabet $ Q_1 \sqcup Q_1^- $, $ n \ge 1$, such that for every $ 0 < i < n $, we have $ s(\alpha_i) = t(\alpha_{i + 1}) $. (Thus walks are read from right to left).
    
    We denote by $ \xx^- $ the \emph{inverse walk} of $\xx$ that has the form $1_{(v,-i)}$ or $ \alpha_n^- \dots \alpha_2^- \alpha_1^- $ respectively.

    A walk $ \xx $ is said to be a \emph{string} if 
    \begin{itemize}
        \item there do not exist $ 1 \le i < j \le n $ with either $ \alpha_i \alpha_{i+1} \dots \alpha_j $ or its inverse in $ \rho $, and
        \item there does not exist $ 1 \le i \le n $ with $ \alpha_i = \alpha_{i+1}^- $.
    \end{itemize} 
    
A zero-length walk is always a string. The set of all strings for the algebra $ \Lambda $ is denoted $ \St(\Lambda) $.

The source and target functions can be easily extended to all strings by $s(1_{(v,i)}):=v$ for $i\in\{1,-1\}$, $s(\alpha^-):=t(\alpha)$ for $\alpha\in Q_1$, $s(\alpha_1 \alpha_2 \dots \alpha_n):=s(\alpha_n)$ if $n\geq1$, and dually for the target function.

The $ \sigma $ and $ \tau $ functions can be extended by defining 
\begin{itemize}
    \item $ \sigma (1_{(v,i)}) = -i $
and $ \tau(1_{(v,i)}) = i $ for $ v \in Q_0 $ and $ i \in \{1, -1\} $,
\item $ \sigma(\alpha^-) :=\tau(\alpha) $, $ \tau(\alpha^-): = \sigma(\alpha) $ for each $ \alpha \in Q_1 $,
\item  $ \sigma(\alpha_1 \alpha_2 \dots \alpha_n) := \sigma(\alpha_n) $ and $ \tau(\alpha_1 \alpha_2 \dots \alpha_n) := \tau(\alpha_1) $ if $n\geq 1$.
\end{itemize}
For any string $ \xx $ and any $ (v, i) \in Q_0 \times \{1, -1\} $, the concatenation $ 1_{(v, i)}\xx $ (resp. $ \xx1_{(v, i)} $) is defined if $ t(\xx) = v $ and $ \tau(\xx) =i $ (resp. $ s(\xx) = v $ and  $ \sigma(\xx) =-i $).
    
A string $ \bb $ is said to be a \emph{band} if $ \bb^2 $ is a string, there is no substring $ \xx $ of $ \bb $ with $ \bb = \xx^n $ for some $ n > 1 $, and $ \bb $ starts with a direct syllable and ends with an inverse syllable.
\end{defn}
The next theorem, essentially due to Gel'fand and Ponomarev \cite{G&P}, characterizes all finite-dimensional indecomposable $ \Lambda $-modules.
\begin{thm}
    For any string $ \xx $, there is an associated indecomposable string module $ M(\xx) $, and for any band $ \bb $, positive integer $ n $ and non-zero scalar $ \lambda \in \mathcal K^* $, there is an associated indecomposable band module $ M(\bb, n, \lambda) $. A string module is not isomorphic to any band module. Moreover, for any strings $\xx$ and $\yy$, we have $M(\xx)\cong M(\yy)$ if and only if $\xx\in\{\yy,\yy^-\}$. Similarly, $M(\bb,n,\lambda)\cong M(\bb',n',\lambda')$ if and only if $\bb'$ is a cyclic permutation of $\bb$ or $\bb^-$, $n=n'$ and $\lambda=\lambda'$.
    Every finite-dimensional indecomposable $\Lambda$-module is isomorphic to either a string module or a band module.
\end{thm}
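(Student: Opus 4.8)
The plan is to pass through four stages: writing the modules down explicitly, proving they are indecomposable, classifying them up to isomorphism, and finally establishing the completeness claim that nothing else occurs.

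First I would make the modules concrete. A non-trivial string $\xx=\alpha_1\cdots\alpha_n$ traces a walk through vertices $v_0,v_1,\dots,v_n$ with $v_{i-1}=t(\alpha_i)$ and $v_i=s(\alpha_i)$; let $M(\xx)$ be the $\mathcal K$-space on a basis $z_0,\dots,z_n$, declare $z_i$ to sit at $v_i$ (so the vertex idempotent $e_{v_i}$ fixes $z_i$ and annihilates the rest), and let an arrow $a\in Q_1$ act by $z_i\mapsto z_{i-1}$ when $\alpha_i=a$, by $z_{i-1}\mapsto z_i$ when $\alpha_i=a^-$, and by $0$ on every other basis vector; put $M(1_{(v,i)})=S_v$. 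Verifying that the blocking relations of $\rho$ act as zero, so that $M(\xx)$ really is a $\Lambda$-module, is exactly where the string-algebra axioms (at most two arrows at each end of a vertex; at most one non-blocked composite on each side of each arrow) and the two string conditions (no subword or inverse subword in $\rho$; no immediate cancellation $\alpha_i=\alpha_{i+1}^-$) get used. The band module $M(\bb,n,\lambda)$ is built the same way from the walk $\bb^n$, except that its two free ends are identified after inserting a Jordan block: the operator ``go once around $\bb$'' is declared to act as $J_n(\lambda)$ on an $n$-dimensional space placed over the relevant vertex, which is consistent precisely because $\bb$ begins with a direct and ends with an inverse syllable.

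Second, indecomposability and the non-isomorphisms. Represent $M(\xx)$ by its coefficient quiver, the labelled line graph on $z_0,\dots,z_n$ recording the arrow actions (each a partial bijection on basis vectors). A $\Lambda$-endomorphism $f$ must respect the vertex grading, and chasing the identities $f(a\cdot z)=a\cdot f(z)$ along the line forces all the ``diagonal'' scalars of $f$ to coincide; hence $f$ is a scalar plus a nilpotent endomorphism, so $\mathrm{End}_\Lambda(M(\xx))$ is local with residue field $\mathcal K$ and $M(\xx)$ is indecomposable. The same chase on the \emph{circular} coefficient quiver of $M(\bb,n,\lambda)$ shows that $\mathrm{End}_\Lambda(M(\bb,n,\lambda))$ surjects, through the monodromy operator, onto the local ring $\mathcal K[X]/((X-\lambda)^n)$ with nilpotent kernel, so band modules are indecomposable too. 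The same bookkeeping governs isomorphisms: tracking tops and socles and then chasing arrow actions forces an isomorphism $M(\xx)\xrightarrow{\sim}M(\yy)$ to identify the two line graphs, possible only if $\xx=\yy$ or $\xx=\yy^-$, and forces an isomorphism of band modules to match the circular coefficient quivers up to a cyclic rotation and possibly an inversion, giving $\bb'$ a cyclic permutation of $\bb$ or $\bb^-$, then $n=n'$ by a length count, and $\lambda=\lambda'$ by comparing the eigenvalue of the monodromy (the normalisation chosen in the definition of the band modules pinning down the precise statement). Finally, a string module is never isomorphic to a band module: the cyclic walk underlying a band module endows it with an isomorphism-invariant ``return'' datum---its band length together with the monodromy eigenvalue---which a string module, being modelled on a line, cannot have; this disjointness also falls out uniformly from the functorial filtration invoked in the last step.

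Third, and this is where the real work lies, I would show that every finite-dimensional indecomposable $\Lambda$-module is isomorphic to some $M(\xx)$ or $M(\bb,n,\lambda)$, by the functorial filtration method going back to Gel'fand--Ponomarev \cite{G&P} and Ringel, and worked out for string algebras by Butler--Ringel (and by Wald--Waschb\"usch, and by Crawley-Boevey). Concretely: at each vertex one fixes the finitely many maximal ``peak'' words and uses them to build two descending chains of subfunctors of the identity functor on $\Lambda\dmod$; the string-algebra axioms force these functors to be ``reduced'', meaning that after suitable localisations they become pairwise comparable and form a sufficiently distributive system, and a general decomposition lemma then splits an arbitrary module---compatibly with all these subfunctors---into pieces whose successive subquotients are either one-dimensional, which assemble into the string modules, or form a single finite-dimensional indecomposable $\mathcal K[X,X^{-1}]$-module, which assemble into the band modules, using that over the algebraically closed field $\mathcal K$ such a module is a $\mathcal K[X]/((X-\lambda)^n)$ with $\lambda\neq0$. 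I expect this last step to be the main obstacle: building the subfunctors, checking that they are reduced, and proving the decomposition lemma are the substantial part of the argument, while everything in the first two stages is direct verification with the explicit modules in hand.
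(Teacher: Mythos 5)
Your proposal is correct and follows the standard Gel'fand--Ponomarev/Butler--Ringel route (explicit string and band modules, local endomorphism rings via coefficient-quiver chases, completeness by functorial filtrations), which is exactly the argument the paper is relying on: the paper offers no proof of this theorem at all, stating it as ``essentially due to Gel'fand and Ponomarev'' and deferring entirely to the cited literature. Your identification of the functorial-filtration step as the substantial obstacle, with the rest being direct verification, matches where the weight of the classical proof actually sits.
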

For any vertex $ v $ of the quiver $\mathcal Q$, let $ S $ be the associated simple module, and let $ P(S), I(S) $ be the projective cover and injective envelope of $ S $ respectively. Let $ f_S $ denote the composition $ P(S) \twoheadrightarrow  S \hookrightarrow I(S) $. Every non-zero map  $ g : M \to N $ between indecomposables whose image has a composition factor isomorphic to $S$, in the sense of the Jordan-H\"older theorem, is a factor of $f_S$, i.e., there are $h_1,h_2$ such that $f_S=h_2 g h_1$.

\begin{defn}
The \emph{hammock} $ \mathcal{H}(v) $ is the set of (isomorphism classes of) triples $ (N, g, h) $, where $  P(S) \xrightarrow[]{g} N \xrightarrow[]{h} I(S) $ is a factorization of $ f_S $ through an indecomposable module $ N $. The order $ < $ on $ \mathcal{H}(v) $ is defined by $ (N, g, h) < (N', g', h') $ if and only if $ g' $ factors through $ g $.
\end{defn}

The hammock $ \mathcal H(v) $ consists of string modules as well as band modules.
Denote by $H(v)$ its subposet consisting of string modules. The element $M(\xx)\cong M(\xx^-)$ of $H(v)$ can be thought of as the pair $(\xx_1,\xx_2)$ of strings, where $\xx=\xx_1\xx_2$ and $t(\xx_1)=s(\xx_2)=v$. Without loss of generality, we assume that $\tau(\xx_2)=1$.

The left (resp. right) projection of the poset $H(v)$ is a linear order $(H_l(v),<_l)$ (resp. $(H_r(v),<_r)$), where for different strings $\xx_1,\xx_2$, we have $\xx_1<_l\xx_2$ if and only if for the longest string $ \ww $ with $ \xx = \xx'\ww $ and $ \yy = \yy'\ww $ for some $ \xx', \yy' $, either $ \xx' $ has a direct rightmost syllable or $ \yy' $ has an inverse rightmost syllable. The ordering  $ <_r $ on $H_r(v)$ is defined by $\xx'_1 <_r\xx'_2$ if and only if ${\xx'_1}^-<_l{\xx'_2}^-$. Thus, we have $ H(v) = \{(\xx_1, \xx_2) \in H_l(v) \times H_r(v) \mid \xx_1\xx_2 \in \St(\Lambda)\} $.

In the poset $ \mathcal H(v) $, all the band modules of the form $ M(\bb, n ,\lambda) $ corresponding to a fixed band $ \bb $ occupy the same position with respect to $H(v)$--identifying such modules in $\mathcal H(v)$ for each band yields us the poset $ \overline H (v) $ containing $ H(v) $ as a subposet and having a single \emph{band point}, denoted formally as $ (^\infty\bb', {\bb'}^\infty) $ for each $ \bb' \in H_l(v) \cap H_r(v) $ that is a cyclic permutation of a band $\bb$. The linear orders $ \overline H_l(v) , \overline H_r(v)$ together with their order relations $ <_l $, $ <_r $ are the left and right projections of $ \overline H(v) $ respectively, while ensuring that the natural projection maps preserve order. 

Schr\"oer established a connection between the rank of a morphism between indecomposable modules for a string algebra and an order-theoretic invariant, called the \emph{density}, of the corresponding interval in the hammock posets $\overline H(v)$.

\begin{customthm}{$ \textbf{L} $}\cite[pp.66-67]{Schroer98hammocksfor}(cf. \cite[Theorem~3.4.4(4)]{SSK}) Suppose $ (\xx_1, \xx_2) < (\yy_1, \yy_2) $ in $ \overline H(v) $. Then the rank of the canonical map $ M(\xx_1\xx_2) \to M(\yy_1\yy_2) $ in $\mathcal H(v)$ depends only on the order type of the interval $ [(\xx_1, \xx_2), (\yy_1, \yy_2)] \subseteq \overline H(v) $.
\end{customthm}

\section*{$ B : $ Finite description linear orders}
\label{section: B}

In this section, we will define the class of finite description linear orders after introducing some key operations on linear orders. We will also recall trees and describe the frontier construction of a linear order from any tree. All the terminology which is not explained here can be found in the standard text \cite{rosenstein}.

The most common operations on linear orders are addition, multiplication and finitary shuffles, all of which are special instances of the  \emph{ordered sum} operation.

\begin{defn}
Given a linear order $ (L, <) $ and a sequence $ \lan (L_a, <_a) \mid a \in L \ran $ of linear orders, the \emph{ordered sum} $ \sum_{a \in L} L_a $ is defined to be the linear order formed by the set $ \{(a, x) \mid x \in L_a\} $ under the unique order relation $ < $ satisfying $ (a, x) < (a', x') \iff a < a' $ or ($ a = a' $ and $ x <_a x' $).
  
The \emph{sum} $ L_0 + L_1 $ and the \emph{product} $ L_0 \cdot L_1 $ are defined as $ \sum_{i \in \mbf 2} L_i $ and  $ \sum_{x \in L_1} L_0 $ respectively.
\end{defn}

To define finitary shuffles, we need Cantor's theorem on the existence and uniqueness of certain \emph{homogeneous colourings} of $ \mathbb{Q} $.
\begin{defn}
  For any natural $ n $, say that a coloring $ \chi : \mathbb Q \to n $ of the rational number line by $ n $ colors is \emph{homogeneous} if for any $ k < n $ the preimage $ \chi^{-1}(k) $ is dense in $ \mathbb Q $.
\end{defn}

\begin{thm}[\cite{rosenstein}, Theorems 7.11 and 7.13]
  For any natural $ n $, there exists a homogeneous $ n $-coloring of $ \mathbb{Q} $. Furthermore, any two such colourings $ \chi, \chi' $ are isomorphic in the sense that there exists an order automorphism $ f : \mathbb{Q} \to \mathbb{Q} $ with $ \chi \circ f = \chi' $.
\end{thm}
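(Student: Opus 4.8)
The plan is to prove existence by a greedy stagewise construction and uniqueness (``categoricity of homogeneity'') by a Cantor-style back-and-forth, in both cases working over fixed enumerations of $\mathbb{Q}$ and of the relevant auxiliary data.

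For existence, fix an enumeration $\mathbb{Q}=\{q_0,q_1,q_2,\dots\}$ together with an enumeration $\{(I_0,c_0),(I_1,c_1),\dots\}$ of the countably many pairs consisting of a nonempty open interval $I\subseteq\mathbb{Q}$ with rational endpoints and a colour $c<n$. I would build $\chi$ as the union of an increasing chain of finite partial colourings $\chi_0\subseteq\chi_1\subseteq\cdots$. At an even stage $2k$ I pick some rational $r\in I_k$ not yet in the domain of $\chi_{2k}$ -- possible since $I_k$ is infinite while the current domain is finite -- and extend by assigning it the colour $c_k$; at an odd stage $2k+1$ I ensure that $q_k$ lies in the domain, colouring it arbitrarily (say $0$) if it is not yet coloured. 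The odd stages guarantee that $\chi=\bigcup_k\chi_k$ is a total function $\mathbb{Q}\to n$, and the even stages guarantee that every colour class meets every rational open interval, so $\chi$ is homogeneous. (For $n\geq 1$ each colour class is thereby nonempty; the case $n=1$ is also covered by the constant colouring.)

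For uniqueness, given homogeneous $n$-colourings $\chi,\chi'$ of $\mathbb{Q}$, fix enumerations $\{a_k:k\in\mathbb{N}\}$ and $\{b_k:k\in\mathbb{N}\}$ of the domain copy and codomain copy of $\mathbb{Q}$, and build an increasing chain of finite partial maps $f_0\subseteq f_1\subseteq\cdots$, each strictly order-preserving on its finite domain and satisfying $\chi(f_k(q))=\chi'(q)$ for every $q$ in its domain. At a ``forth'' stage I adjoin $a_k$ to the domain if it is not already present: $a_k$ falls into a gap determined by its immediate neighbours $p<q$ within the current domain (with the evident degenerate cases when $a_k$ is below or above everything), and the images $f_k(p)<f_k(q)$ bound a nonempty open interval, or a ray, in the codomain; since the colour class $\chi^{-1}(\chi'(a_k))$ is dense by homogeneity of $\chi$, it meets that interval, and I send $a_k$ to such a point, preserving both order and colour. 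A symmetric ``back'' stage adjoins $b_k$ to the range using homogeneity of $\chi'$. The union $f=\bigcup_k f_k$ is then an order-preserving bijection $\mathbb{Q}\to\mathbb{Q}$, hence an order-automorphism, with $\chi\circ f=\chi'$.

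There is no genuine obstacle here, only two points that need care: the bookkeeping that makes the limit maps total and, in the uniqueness argument, surjective; and the observation that one can always extend a finite partial map while respecting order and colour \emph{simultaneously}. The latter is exactly what density of each colour class buys -- every nonempty rational interval contains a point of each prescribed colour -- so the order constraint (land in the correct interval) and the colour constraint (land in the correct colour class) are jointly satisfiable at every step. Since this is the classical Cantor back-and-forth decorated with colours, I would present it compactly, spelling out only the gap-and-interval case analysis in the inductive step.
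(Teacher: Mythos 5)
Your proof is correct: the greedy stagewise construction for existence and the colour-decorated Cantor back-and-forth for uniqueness are exactly the classical arguments. The paper itself offers no proof of this statement --- it is quoted directly from Rosenstein's book (Theorems 7.11 and 7.13) --- and your argument is essentially the one given there, so there is nothing to reconcile.
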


The $ n $-shuffle of $ L_0, L_1, \dots, L_{n-1} $ is obtained by starting with a homogeneous $ n $-coloring of $ \mathbb{Q} $ and then replacing every rational in $ \chi^{-1}(k) $ by a copy of $ L_k $ for each $ k < n $.

\begin{defn}
  Let $ L_0, L_1, \dots, L_{n - 1} $ be $ n $ linear orders. Then their \emph{$ n $-ary shuffle} $ \Xi(L_0, L_1, \dots, L_{n - 1}) $ is defined to be $ \sum_{r \in \mathbb{Q}} L_{\chi(r)} $, where $ \chi : \mathbb{Q} \to n $ is a homogeneous $ n $-coloring.
\end{defn}

\begin{defn}
\label{defn: LOfd}
  The class of \emph{finite description linear orders} is the smallest class of linear orders containing $ \mbf 0 $ and $ \mbf 1 $, and closed under additions, multiplication by $ \omega $, multiplication by $ \omega^* $, order isomorphisms and finitary shuffles.
\end{defn}

Linear orders often arise as frontiers of trees. 

\begin{defn}
Let $ (\omega^{< \omega}, \subseteq) $ denote the set of all finite sequences of naturals, where for $ x, y \in \omega^{< \omega} $ the notation $ x \subseteq y $  (read $ y $ extends $ x $) denotes that $ x $ is a prefix of $ y $.

    A \emph{tree} $ T $ is a subset of $ \omega^{< \omega} $ that is closed under prefixes. Elements of a tree are called \emph{nodes}. Every non-empty tree contains the sequence $ \lan \ran $, which we call the \emph{root node}. Nodes which are not proper prefixes of any other nodes are called \emph{leaf nodes}.

Given a set $D$, a \emph{$ D $-labelled tree} is a pair $ (T, \chi: T \to D)$ for a function $\chi$.
\end{defn}

There is a natural linear order on $ \omega^{< \omega} $, and hence on the leaf nodes of any tree. 

\begin{defn}
\label{defn: frontier}
    The \emph{preorder relation} $ \preorder $ on $ \omega^{< \omega} $ is defined by putting $ x $ $ \preorder $ $ y $ if either $ x \subsetneq y $ or $ x(n) < y(n) $ for the smallest natural $ n $ with $ x(n) \not= y(n) $.
    Given a tree $ T $, the set of its leaf nodes equipped with the preorder relation is called its \emph{frontier} and is denoted $ \mathbf{Front}(T) $.
\end{defn}

\section*{$ C : $ Word problems}
\label{section: C}
For this section, fix a finite set $ \Gamma $, called the alphabet.

\begin{defn}
    A word over $ \Gamma $ is a finite sequence of elements of $ \Gamma $.
    Given words $ x $ and $ y $, let $ xy $ denote their \emph{concatenation}, i.e., the word $ x $ followed by the word $ y $.
    Denote by $ \Gamma^* $ the set of all words over $ \Gamma $. The empty word is denoted $ \beps $.

  A \emph{word problem} over $ \Gamma $ is a system of equations
$ \lan v_i = t_i \mid i = 1, 2, \dots, k \ran $
  in which the distinct unknowns $ v_1, v_2, \dots, v_k $ are equated to words $ t_1, t_2, \dots, t_k $ respectively over the alphabet $ \Gamma \sqcup \{v_1, v_2, \dots, v_k\} $.
  A tuple $ \lan x_1, x_2, \dots, x_k \ran $ of words over the alphabet $ \Gamma $ is said to be a \emph{solution} of the word problem if the equations simultaneously hold true when we substitute each $ v_i $ by $ x_i $.
\end{defn}

Courcelle \cite{courcelle1978frontiers} and Heilbr\"unner \cite{heilbrunner1973gleichungssysteme, heilbrunner1980algorithm} contributed extensively to the study of word problems. 

\begin{example}
\label{example: word problem}
Consider the word problem $u={\star} u{\star}$ over the alphabet $ \Gamma = \{{\star}\} $. This word problem has no solution in finite words.
\end{example}

Courcelle generalized the notion of word to arbitrary length words by introducing arrangements, to solve all word problems.

\begin{defn}
  An \emph{arrangement} $ A $ over $ \Gamma $ is a pair $ (L, \chi) $, where $ L $ is a linear order and $ \chi: L \to \Gamma $ is a function, which we call the \emph{colouring function}. The \emph{concatenation of arrangements} $ (L, \chi) $ and $ (L', \chi') $ is defined to be $ (L + L', \chi \sqcup \chi') $.
\end{defn}

In \cite[Proposition~2.4, \S~4]{courcelle1978frontiers}, he showed that every word problem has a solution that is universal in a suitably-defined category of all solutions. In fact, given a word problem over $\Gamma$ in unknowns $v_1,\dots,v_n$, he showed that the universal solution arose as the frontier of an infinite tree obtained naturally from the word problem. For brevity, let $ \overline \Gamma $ denote $ \Gamma \sqcup \{v_1, v_2, \dots, v_n\} $.

\begin{customcons}{$ \delta $}
\label{cons: trees for word problems}
\label{cons: delta}
    Consider the word problem $ \lan v_i = x_{i1} x_{i2} \dots x_{ij_i} \mid 1 \le i \le n\ran $ over $\Gamma$ with $ j_i\geq1$ and $ x_{ik} \in\overline\Gamma$ for $ 1 \le k \le j_i $ for each $ i $. The \emph{associated infinite labelled trees} $ \lan (T_i\subseteq\omega^{<\omega},f_i:T_i\to\overline\Gamma)\mid 1\leq i\leq n\ran$ are obtained as the directed unions of $ \subseteq $-increasing sequences $ \lan (T_i^n,f_i^n:T_i^n \to\overline \Gamma) \mid n \in \N \ran $ of labelled finite trees satisfying for each $i$:
    \begin{enumerate}
        \item $ (T_i^0, \chi_i^0) $ is the labelled tree comprising of a root labelled $ v_i $ with $ j_i $ children, labelled $ x_{i1}, x_{i2}, \dots, x_{ij_i} $ from left to right;
        \item $ T_i^{n+1} $ is obtained from $ T_i^n $ by replacing each leaf node labelled $ v_j $ by a copy of $ T_j^0 $.
    \end{enumerate}
\end{customcons}
\begin{thm}[\cite{courcelle1978frontiers}]
\label{thm: courcelle frontier}
    The tuple  $ \lan (\Front(T_i), f_i\!\res_{\Front(T_i)}) \mid 1 \le i \le n \ran $ of arrangements is the universal solution to the word problem posed in Construction~\ref{cons: delta}.
\end{thm}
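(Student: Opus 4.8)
The plan is to verify the two conditions that characterise the universal solution of a word problem: first, that the tuple $\langle A_i := (\Front(T_i),\, f_i\!\res_{\Front(T_i)}) \mid 1 \le i \le n\rangle$ is a solution, and second, that it is universal among all solutions in the sense of \cite[Proposition~2.4]{courcelle1978frontiers}; uniqueness of the universal solution up to isomorphism is part of that cited result, so I do not reprove it.

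First I would check that the tuple is a solution. Fix $i$ and, for $a \in \Gamma$, write $A_a$ for the one-element arrangement coloured $a$; with the convention $A_{v_j} := A_j$ the assertion to prove is the arrangement isomorphism $A_i \cong A_{x_{i1}} + A_{x_{i2}} + \cdots + A_{x_{ij_i}}$. The root of $T_i$ has exactly the $j_i$ children $c_1, \dots, c_{j_i}$ in left-to-right order, with $c_k$ labelled $x_{ik}$ in $T_i^0$. Since the expansion performed in Construction~\ref{cons: delta} is a purely local operation at each $v$-labelled leaf, it commutes with passing to subtrees, so a short induction on $m \ge 0$ shows that the labelled subtree of $T_i^{m+1}$ rooted at $c_k$ is the single leaf labelled $x_{ik}$ when $x_{ik} \in \Gamma$, and a copy of $(T_j^{m}, f_j^{m})$ when $x_{ik} = v_j$. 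Passing to the directed union over $m$, the labelled subtree of $T_i$ rooted at $c_k$ is correspondingly the leaf $x_{ik}$, or a copy of $(T_j, f_j)$. Every leaf of $T_i$ lies below exactly one $c_k$, and leaves below $c_k$ precede leaves below $c_{k'}$ under $\preorder$ whenever $k < k'$, so the frontier $\Front(T_i)$ is the ordered sum over $k = 1, \dots, j_i$ of the frontiers of these subtrees, with colourings matching those of the subtrees; this is exactly the displayed isomorphism, and substituting each $v_i$ by $A_i$ therefore satisfies all $n$ equations.

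Next I would establish universality. Let $\langle B_1, \dots, B_n\rangle$ be an arbitrary solution. Iterating the defining equations $m$ times and using the substitution operation on arrangements produces, for each $i$, a decomposition of $B_i$ as an ordered sum indexed by the leaves of $T_i^m$, in which a leaf $l$ contributes the one-point arrangement coloured $f_i^m(l)$ if $f_i^m(l) \in \Gamma$ and a copy of $B_j$ if $f_i^m(l) = v_j$; the decompositions for levels $m$ and $m+1$ are coherent, agreeing away from the leaves that get expanded in passing from $T_i^m$ to $T_i^{m+1}$. Every leaf $l$ of the infinite tree $T_i$ occurs as a $\Gamma$-labelled leaf of $T_i^m$ for all $m \ge \operatorname{depth}(l)$, so the coherent decompositions single out one element $\psi_i(l) \in B_i$, which defines a map $\psi_i : \Front(T_i) \to B_i$. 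I would then check that $\psi_i$ is colour-preserving by construction, and order-preserving because any instance of $l \preorder l'$ between leaves of $T_i$ is already witnessed inside $\Front(T_i^m)$ for large $m$, where the level-$m$ decomposition of $B_i$ respects that order; hence $\langle \psi_i \rangle$ is a morphism of arrangements, and it is compatible with the equation structure since it was built from the very decompositions those equations impose. Uniqueness would follow because any morphism from the frontier tuple to $\langle B_i\rangle$ must intertwine the two coherent systems of level-$m$ decompositions --- which on the $B_i$ side are forced by the equations --- and hence must agree with $\psi_i$ on every leaf of $T_i$.

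The easy part is the tree-and-ordered-sum bookkeeping in the first step. The hard part will be the second step: making the notion of ``coherent level-$m$ decomposition'' and the limit defining $\psi_i$ precise, and --- the genuinely delicate point --- verifying that $\langle\psi_i\rangle$ is a morphism in exactly Courcelle's category of solutions, so that it is the leaves of $T_i$, and not its infinite branches, that carry precisely the data demanded by the universal property.
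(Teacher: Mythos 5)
The paper does not actually prove this statement: it is quoted verbatim from Courcelle \cite{courcelle1978frontiers} (via \cite[Proposition~2.4, \S~4]{courcelle1978frontiers}) and used as a black box, so there is no in-paper argument to compare yours against. Judged on its own terms, your reconstruction is sound and follows what is essentially Courcelle's original route: part one (the subtree of $T_i$ rooted at the $k$-th child of the root is a single $\Gamma$-leaf or a copy of $T_j$, hence the frontier satisfies the $i$-th equation as an ordered sum) is correct bookkeeping, and part two (the coherent system of level-$m$ decompositions of an arbitrary solution $B_i$, with $\psi_i(l)$ defined as the stable one-point block attached to a $\Gamma$-leaf $l$) is the right construction of the comparison morphism.

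The one point you flag as delicate really is the crux, and you should make it explicit rather than leave it open: a ``solution'' in Courcelle's category must carry the \emph{chosen} isomorphisms $B_i \cong B_{x_{i1}} + \cdots + B_{x_{ij_i}}$ as part of its data, and morphisms must intertwine them. Without this, both the well-definedness of your level-$m$ decompositions (different witnessing isomorphisms give different decompositions, hence different $\psi_i$) and the uniqueness clause fail --- e.g.\ for $u = {\star}u$ the arrangements $\omega$ and $\omega + \omega$ are both solutions and admit many colour- and order-preserving maps between them, so initiality can only hold relative to the decomposition-respecting morphisms. Once that definition is pinned down, your coherence and uniqueness arguments go through as sketched, so this is a matter of completing a flagged step rather than a wrong turn.
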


\begin{example}
\label{example: tree expansion}
For the word problem in Example~\ref{example: word problem}, Construction~\ref{cons: delta} gives the tree in Figure~\ref{fig: tree expansion} whose frontier is isomorphic to $ \omega + \omega^* $, which is the linear order underlying the universal solution $ (\omega + \omega^*, \chi: \omega + \omega^* \to \{{\star}\}) $. 
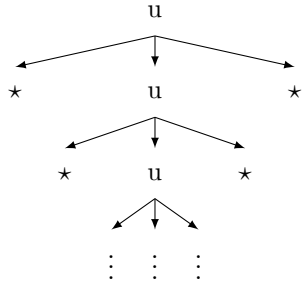
\begin{figure}[htbp]
    \centering
       
\begin{forest}
    [u [${\star}$\vspace{2cm}] [u [${\star}$\vspace{1cm}] [u [$ \vdots $] [$ \vdots $] [$ \vdots $]] [\vspace{1cm}${\star}$]] [${\star}$\vspace{2cm}]]]
\end{forest}
\caption{Tree expansion for $ u = {\star} u {\star}$}
    \label{fig: tree expansion}
\end{figure} 
\end{example}

Word problems were solved in full generality by Heilbr\"unner \cite{heilbrunner1980algorithm} based on the work in his PhD thesis \cite{heilbrunner1973gleichungssysteme}, who gave an algorithm to compute the universal solution using the basic operations on linear orders described in the previous section.

\begin{customthm}{$ \delta $}[\cite{heilbrunner1980algorithm}]
\label{thm: delta}
If the tuple $ \lan (L_i , \chi_i) \mid 1\leq i\leq n \ran $ of arrangements is the universal solution to a given word problem, then $ L_i $ is a finite description linear order for each $ i $.
\end{customthm}
\begin{rem}
\label{rem: delta}
    Heilbr\"unner's algorithm used in the proof of the above result takes a word problem as input and computes its initial solution explicitly in time linear in the input size.
\end{rem}

Courcelle identified those word problems for which the underlying linear orders of the arrangements in the universal solution are scattered.
\begin{defn}\cite[\S~3.5]{courcelle1978frontiers}
Using the notations of Construction \ref{cons: delta}, the word problem described there is said to be \emph{quasi-rational} if for every $ i $ for which $ T_i $ has a leaf node with a label in $ \Gamma $, of any two nodes of $ T_i $ labelled $ v_i $, one extends the other.
\end{defn}

\begin{thm}\cite[Theorem~3.8]{courcelle1978frontiers}
\label{prop: quasi-rational systems}
    A word problem is {quasi-rational} if and only if the linear orders underlying its universal solution are scattered.
\end{thm}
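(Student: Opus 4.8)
The plan is to prove both implications directly from the labelled trees $T_i$ of Construction~\ref{cons: delta}, exploiting two structural features: the \emph{self-similarity} that the subtree rooted at any node labelled $v_j$ is a copy of $T_j$, and the fact that every leaf sits at \emph{finite} depth. Throughout I would freely use that $\Front$ of a subtree is an interval of the ambient frontier (the leaves of a subtree are consecutive in the preorder), together with Hausdorff's theorem \cite{rosenstein} that the scattered linear orders form the least class containing the finite orders and closed under $\omega$-indexed and $\omega^*$-indexed sums; in particular, a sum of scattered orders indexed by a scattered order is again scattered.

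For the implication that non-quasi-rationality produces a non-scattered frontier, suppose some $T_i$ has a leaf labelled in $\Gamma$ together with two incomparable nodes $p_0, p_1$ both labelled $v_i$. Writing $L=\Front(T_i)$, the subtrees at $p_0,p_1$ furnish two disjoint intervals of $L$, each order-isomorphic to $L$, with the first entirely left of the second. First I would observe that the leaves of $T_i$ lying outside these two subtrees cannot all be absent: otherwise every leaf would lie inside ever-deeper nested copies of $T_i$, contradicting that each leaf has finite depth. Hence, possibly after one application of the self-similar expansion (to push a leaf lying to one side of the two copies into the gap between two copies), one obtains inside $L$ a \emph{separated configuration}: disjoint intervals $A\prec B$ with $A\cong B\cong L$ and a point strictly between them. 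Self-similarity makes this configuration available inside every copy of $L$, so I can recurse along the full binary tree $2^{<\omega}$, choosing separating points $p_s$ with $p_{s0}\prec p_s\prec p_{s1}$; the map $s\mapsto p_s$ is then strictly monotone for the dyadic order $\lessstar$ on $2^{<\omega}$, embedding $\eta$ into $L$, so $L$ is non-scattered.

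For the converse I would argue the contrapositive: if some frontier is non-scattered, the system is not quasi-rational. Let $S$ be the nonempty set of indices $j$ with $\Front(T_j)$ non-scattered. The first step is to show each $T_j$ with $j\in S$ contains a \emph{branch node}: descend from the root, at each stage passing to a child whose subtree has non-scattered frontier (such a child exists because a finite sum is non-scattered only when some summand is). If this descent never forked, decomposing the frontier along the resulting infinite path would express $\Front(T_j)$ as the concatenation of an $\omega$-indexed and an $\omega^*$-indexed sum of the scattered off-path frontiers, hence scattered by Hausdorff's theorem---a contradiction. So the descent forks at a node with two incomparable non-scattered children. The second step upgrades this to a genuine violation: form the finite reachability digraph on $S$ (an edge $j\to k$ whenever $T_j$ contains a node labelled $v_k$ with $k\in S$), and pick $j$ in a sink strongly connected component $\mathcal{C}$. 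The labels of the two incomparable non-scattered children at the branch node of $T_j$ lie in $S$ and are reachable from $j$, hence lie in $\mathcal{C}$; strong connectivity then yields, inside each of their two incomparable subtrees, a node labelled $v_j$. These two $v_j$-labelled nodes are incomparable, lie in $T_j$, and $T_j$ has a $\Gamma$-leaf since its frontier is nonempty; thus $j$ witnesses the failure of quasi-rationality.

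The main obstacle is the converse, and specifically the bookkeeping in its second step: quasi-rationality constrains only nodes carrying the \emph{same} label as the tree containing them, so a naive branching argument yields incomparable nodes with mismatched labels. The device resolving this is the passage to a sink strongly connected component of the reachability digraph, together with the self-similarity of Construction~\ref{cons: delta}, which is precisely what converts a fork into incomparable nodes of the required common label $v_j$ inside $T_j$. I expect the remaining points to be routine: the interval and finite-depth facts about frontiers, the verification that the off-path sums in the first step are genuinely $\omega$- and $\omega^*$-indexed, and the standard fact that $(2^{<\omega},\lessstar)$ is order-isomorphic to $\eta$.
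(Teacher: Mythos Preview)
The paper does not supply its own proof of this statement; it is quoted as \cite[Theorem~3.8]{courcelle1978frontiers} and used as a black box (it reappears only once, to justify Proposition~\ref{prop: etabar and omegabar}). So there is nothing in the paper to compare your argument against.

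That said, your proposal is a correct self-contained proof. Both directions are sound. In the non-quasi-rational $\Rightarrow$ non-scattered direction, the finite-depth argument does force an external leaf, and your ``one self-similar expansion'' trick genuinely produces a separating point: e.g.\ if the only external leaf $p$ lies to the left of $A\prec B$, expanding $B$ as $p_B+B_A+B_B$ yields $A\prec p_B\prec B_A$ with $A\cong B_A\cong L$. For the converse, the delicate point is exactly the one you flag: a raw branching argument only gives incomparable nodes with possibly different labels $v_{k_0},v_{k_1}$, whereas quasi-rationality speaks of two $v_j$-labelled nodes inside $T_j$. Your passage to a sink strongly connected component of the reachability digraph on $S$ is the right device: since $j\to k_0,k_1$ and $j$ lies in a sink SCC, both $k_0,k_1$ lie in the same SCC, and strong connectivity together with self-similarity lets you push down to $v_j$-labelled nodes inside each of the two incomparable subtrees. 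The Hausdorff-theorem step (an $\omega$-sum followed by an $\omega^*$-sum of scattered orders is scattered) is standard and correctly invoked.

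Two minor points worth making explicit when you write it out: first, in the descent of Step~1 every non-scattered child is necessarily labelled by some $v_m$ (a $\Gamma$-labelled node is a leaf with singleton frontier), so the labels $k_0,k_1$ automatically lie in $S$; second, the nodes $p_0,p_1$ in the forward direction cannot include the root (they are incomparable), so the recursion depth strictly increases and the finite-depth contradiction is genuine.
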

 
\section*{$ D : $ Automata}
In this section, we start by recalling deterministic finite automata, thus completing the descriptions of the vertices in Figure~\ref{fig: heart of intro}. After introducing \emph{linguages} (Definition~\ref{defn: linguages}), we describe arrows $ \beta $ (Theorem~\ref{thm: beta}), $ \epsilon $ (Construction and Theorem~\ref{consthm: epsilon})   and  $ \gamma $ (Construction~\ref{cons: gamma}, Theorem~\ref{thm: gamma}) from the same figure.

\label{section: D}
The reader interested in finite automata is referred to \cite{kozen2012automata} for a more comprehensive treatment.  

\begin{defn}
\label{defn: automata}
    Given a finite alphabet $ \Sigma $, a  \emph{(deterministic finite) automaton} $ M $ over $ \Sigma $ is a tuple $(Q, q_s, F, \delta) $, where $ Q $ is a finite set of \emph{states}, $ q_s \in Q $ is the \emph{start state}, $ \delta : (Q \times \Sigma) \to Q $ is a partial function governing \emph{state transition} and $ F \subseteq Q $ is the set of \emph{accept states}.
\end{defn}

Very often, we will consider automata over the alphabet $ \two := \{0, 1\} $.

Automata are finite state machines which either accept or reject their input--a word over $ \Sigma $.  Automata are easily described using finite labelled multigraphs (with loops allowed) with one additional sourceless unlabelled arrow, whose vertices denote the states, labelled arrows denote the transitions, the target of the unlabelled arrow marks the start state, and accept states are denoted with double boundaries.

\begin{example}
\label{example: finite automata}
Consider the automata $ M_1 $ and $ M_2 $ in Figure \ref{fig: M} over $ \two $, each with only one accept state.
    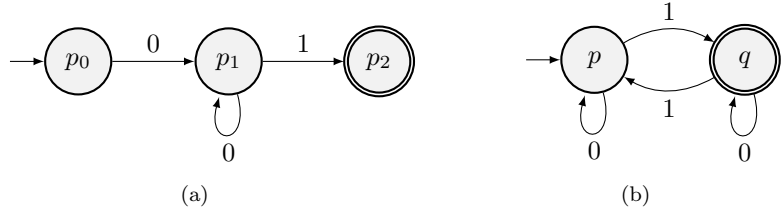
\begin{figure}[htbp]
        \centering
        \subfigure[]{\begin{tikzpicture}        
\node[state, initial left](root) {$ p_0 $};
\node[state] at (2, 0) (first) {$ p_1 $};
\node[state, accepting] at (4, 0) (second) {$ p_2 $};
\draw (root) edge[above] node{0} (first);
\draw (first) edge[above] node{1} (second);
\draw (first) edge[loop below, below] node{0} (first);   \end{tikzpicture}}\hspace{1cm}
        \subfigure[]{\begin{tikzpicture}        
\node[state, initial left](root) {$ p $};
\node[state, accepting] at (2, 0) (rt) {$ q $};
\draw (root) edge[bend left, above] node{1} (rt);
\draw (rt) edge[bend left, below] node{1} (root);
\draw (root) edge[loop below] node{0} (root);
\draw (rt) edge[loop below] node{0} (rt);       \end{tikzpicture}}
        \caption{(a) Automaton $ M_{1} $, (b) Automaton $ M_{2} $}
        \label{fig: M}
    \end{figure}

For $ M_1 $, we have $ \delta(p_0, 0) = \delta(p_1, 0) = p_1 $ and $ \delta(p_1, 1) = p_2 $.

For $ M_2 $, we have $ \delta(p, 0) = \delta(q, 1) = p $ and $ \delta(q, 0) = \delta(p, 1) = q $.

\end{example}

We will sometimes use automata diagrams in which some nodes without outgoing arrows are labelled by the names of other automata. This is shorthand and such a node should be mentally replaced by the start node of its label. See Figure~\ref{figure: automata concatenation} for an example.

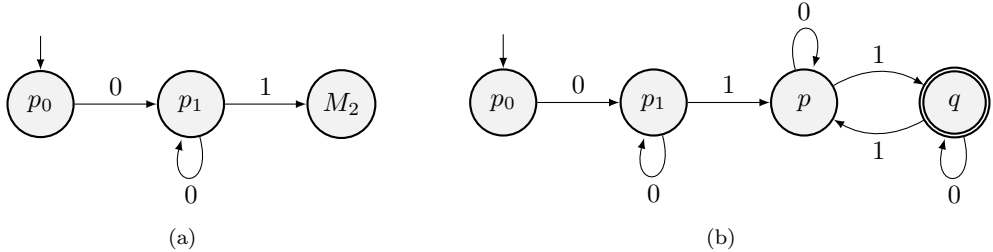
\begin{figure}[htbp]
        \centering  
        \subfigure[]{
        \begin{tikzpicture}
\node[state, initial above](root) {$ p_0 $};
\node[state] at (2, 0) (first) {$ p_1 $};
\node[state] at (4, 0) (second) {$ M_2 $};
\draw (root) edge[above] node{0} (first);
\draw (first) edge[above] node{1} (second);
\draw (first) edge[loop below, below] node{0} (first); 
      \end{tikzpicture}}\hspace{1cm}
      \subfigure[]{
      \begin{tikzpicture}
\node[state, initial above](root) {$ p_0 $};
\node[state] at (2, 0) (first) {$ p_1 $};
\node[state] at (4, 0) (second) {$ p $};
\draw (root) edge[above] node{0} (first);
\draw (first) edge[above] node{1} (second);
\draw (first) edge[loop below, below] node{0} (first);  
\node[state, accepting] at (6, 0) (rt) {$ q $};
\draw (second) edge[bend left, above] node{1} (rt);
\draw (rt) edge[bend left, below] node{1} (second);
\draw (second) edge[loop above] node{0} (second);
\draw (rt) edge[loop below] node{0} (rt);
\end{tikzpicture}}\caption{Diagram (a) is shorthand for Diagram (b)}\label{figure: automata concatenation}
        \end{figure}

An automaton running on an input word reads it character-by-character from left to right, while continuously updating its own state using $ \delta $. It \emph{accepts} the word if this run ends in an accept state.

\begin{defn}
Inductively define the partial function $ \deltahat : Q \times \Sigma^* \to Q $ for $ (q, x) \in Q \times \Sigma^* $ as
        \[\deltahat(q, x) :=
    \begin{cases}  q & \text{if $ x = \beps $};\\
    \delta(\deltahat(q, y), a) & \text{if $ x = ya $ for some $ (y, a) \in \Sigma^* \times \Sigma $, and $ \deltahat(q, y) $, $ \delta(\deltahat(q, y), a) $ are defined};\\
    \text{not defined} & \text{otherwise}.
    \end{cases}\]

    The \emph{run} of $ M $ on input $ x $ ends in the state $ \deltahat(q_s, x) $. An automaton $ M $ is said to \emph{accept} a word $ x $ if $ \deltahat(q_s, x) $ is defined and contained in $ F $. The set $L(M):= \{x \in \Sigma^* \mid M $ accepts $ x \} $ is the \emph{language} of $ M $.
\end{defn}

\begin{rem}\label{rem: good automata}
An automaton $ M = (Q, q_s, F, \delta) $ is said to be \emph{good} if for each state $ q \in Q $  there exist strings $ x, y $ with $ \deltahat(q_s, x) = q $ and $ \deltahat(q, y) \in F $. 
Given any automaton, removing the states which do not meet this condition yields a good automaton with the same language, so we will work without loss of generality with {good automata}.
\end{rem}

The connection between automata and string algebras was documented earlier in \cite{rees2008automata}, where it was shown that the set of strings for a string algebra is the language of a finite automaton whose alphabet is the set of all arrows of the quiver along with their inverses.

\begin{customthm}{$ \beta $}[\cite{rees2008automata}]
\label{thm: beta}
    For a string algebra $ \Lambda $, there is an automaton $ M_\Lambda $ over $ Q_1 \sqcup Q_1^- $ such that $ \St(\Lambda) = L(M_\Lambda) $.
\end{customthm}

However, \cite{rees2008automata} did not capture the hammock order relations between the strings. In this paper, we exploit a natural linear order relation $\inorder $, called \emph{inorder}, on the set $ \two^* $ of finite words over the alphabet $ \two $ ordered by $0<1$ to assign a linear order to every automaton.

Observe that $ \two^* $ has a natural tree structure, with $ x0 $ and $ x1 $ being respectively the left and right children of $ x $ for every word $ x $ (see Figure~\ref{fig: infinite binary tree}).
The inorder $ \inorder $ is the linear order relation which arranges the nodes of this tree from left to right.
\begin{figure}[htbp]
  \begin{center}
\begin{forest}
    [$ \beps $ [0 [00 [000 [$ \vdots $] [$ \vdots $]] [001 [$ \vdots $] [$ \vdots $]]] [01 [010 [$ \vdots $] [$ \vdots $]] [011 [$ \vdots $] [$ \vdots $]]]] [1 [10 [100 [$ \vdots $] [$ \vdots $]] [101 [$ \vdots $] [$ \vdots $]]] [11 [110 [$ \vdots $] [$ \vdots $]] [111 [$ \vdots $] [$ \vdots $]]]]]
\end{forest}
\end{center}
  \caption{The inherent tree structure of $ \two^* $}
  \label{fig: infinite binary tree}
\end{figure}

\begin{defn}
  Given distinct words $ x, y \in \two^* $, we say that $ x \inorder y $ if  for the longest word $ w $ with $ x = wx' $ and $ y = wy' $ for some words $ x', y' $, either $ x' $ begins with a $ 0 $ or $ y' $ begins with a $ 1 $.
\end{defn}

The language of an automaton becomes a linear order under the induced order structure. 

\begin{defn}
\label{defn: linguages}
  The \emph{linguage} $ \lin(M) $ of an automaton $ M $ over $ \two $ is the linear order $(L(M), \inorder)$.
\end{defn}

\begin{example}
    Consider the automata $ M_1, M_2 $ from Example~\ref{example: finite automata}. Then $ L(M_1) = \{0^n1 \mid n > 0\} $ so that $ \lin(M_1) \cong \omega^* $. Similarly, $ L(M_2) = \{x \in \two^* \mid x $ has an even number of $ 1 $s$ \} $ so that $ \lin(M_2) \cong \eta $.
\end{example}

A fascinating connection between finite description linear orders and linguages of finite automata lies at the heart of this paper.

\begin{thm}
\label{thm: automata and linear orders}
  A linear order $ L $ is a finite description linear order if and only if it is order isomorphic to the linguage $ \lin(M) $ of some automaton $ M $ over $ \two $.
\end{thm}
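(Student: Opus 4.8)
\medskip
\noindent\emph{Proof proposal.} The plan is to establish the two implications by the routes flagged in the introduction: the forward one by building an automaton directly from a finite description linear order, and the converse by passing through word problems.

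\noindent\textbf{From finite description linear orders to linguages.} I would prove by induction on the construction of a finite description linear order $L$ (Definition~\ref{defn: LOfd}) that there is an automaton $M$ over $\two$ with $\lin(M)\cong L$; this is Construction and Theorem~\ref{consthm: epsilon}. For $\mbf 0$ take one non-accepting state and no transitions, and for $\mbf 1$ make that state accepting, so $L(M)=\{\beps\}$. For a sum $L_0+L_1$, given automata $M_0,M_1$ with disjoint state sets, adjoin a fresh non-accepting start state $q_s$ with $\delta(q_s,0)$ the start state of $M_0$ and $\delta(q_s,1)$ that of $M_1$; the accepted words are $0w$ (with $w\in L(M_0)$) and $1w'$ (with $w'\in L(M_1)$), and under $\inorder$ the first block precedes the second with internal order preserved, so $\lin(M)\cong\lin(M_0)+\lin(M_1)$; iterating handles all finite sums. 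For multiplication by $\omega$ (resp.\ $\omega^*$) adjoin a fresh non-accepting start state carrying a self-loop on $1$ (resp.\ on $0$), with the other letter leading to the start of $M$; the accepted words become $1^n0w$ (resp.\ $0^n1w$) with $w\in L(M)$, ordered under $\inorder$ by increasing (resp.\ decreasing) $n$ and then by $w$, which yields $\sum_{n\in\omega}\lin(M)$ (resp.\ $\sum_{n\in\omega^*}\lin(M)$). Order isomorphisms need nothing. For the finitary shuffle $\Xi(L_0,\dots,L_{n-1})$ I would use a ``hub'' consisting of one state $h_i$ per colour together with a fresh non-accepting start state $h$, and a prefix code over $\two$ with which one reads, from $h$ or any $h_i$, either a codeword that \emph{recurses} into some $h_j$ or (only from $h_i$) a codeword that \emph{dispatches} into $M_i$, with all recursion codewords lying on one $\inorder$-side of the dispatch codeword; the linguage then satisfies the fixed-point relation $\lin(M)\cong\sum_{i<n}\bigl(\lin(M_i)+\lin(M)\bigr)$, and via the complete $n$-ary branching of the hub together with Cantor's uniqueness theorem for homogeneous colourings one identifies it with $\Xi(\lin(M_0),\dots,\lin(M_{n-1}))$.

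\noindent\textbf{From linguages to finite description linear orders.} Given an automaton $M=(Q,q_s,F,\delta)$, which I may assume is good by Remark~\ref{rem: good automata}, I would form (Construction~\ref{cons: gamma}) the word problem over the one-letter alphabet $\{\bullet\}$ with an unknown $v_q$ for each $q\in Q$ and, for each $q$, the equation whose right-hand side is the concatenation of $v_{\delta(q,0)}$ when $\delta(q,0)$ is defined, then $\bullet$ when $q\in F$, then $v_{\delta(q,1)}$ when $\delta(q,1)$ is defined; goodness guarantees every right-hand side is non-empty, so this is a legitimate word problem. The crux (Theorem~\ref{thm: gamma}) is that the $v_{q_s}$-component of the universal solution has underlying linear order isomorphic to $\lin(M)$: this equation is exactly the way $\inorder$ splits the words accepted from a state $q$ into those beginning with $0$, the word $\beps$ (present precisely when $q\in F$), and those beginning with $1$, and the leaves of Courcelle's tree (Construction~\ref{cons: delta}) for this word problem correspond, order-preservingly, to the finite accepting runs of $M$. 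Heilbr\"unner's Theorem~\ref{thm: delta} then gives that each component of the universal solution is a finite description linear order, so in particular $\lin(M)$ is one.

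\noindent\textbf{Anticipated difficulty.} The shuffle case of the first direction is the real obstacle: with only two letters one must encode the colour choice into a prefix code and then check both that the induced order of ``addresses'' is dense and that each colour class is dense inside it, so that Cantor's theorem can be invoked; the base cases and the scattered operations are routine by comparison. In the converse direction the only care needed is the reduction to a good automaton and the verification that the infinite expansion tree contributes no leaves beyond the accepting runs, after which Theorems~\ref{thm: gamma} and~\ref{thm: delta} finish the argument.
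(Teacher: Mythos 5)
Your proposal follows essentially the same route as the paper: the forward direction via explicit automata for $\mbf 0$, $\mbf 1$, sums, products with $\omega$ and $\omega^*$, and shuffles (Construction and Theorem~\ref{consthm: epsilon}, where your ``hub with a prefix code'' is exactly the paper's binary tree with $2n+1$ leaves whose even leaves are identified with the root), and the converse via the word problem of Construction~\ref{cons: gamma}, the correspondence between leaves of Courcelle's tree and accepting runs in Theorem~\ref{thm: gamma}, and Heilbr\"unner's Theorem~\ref{thm: delta}. Your observation that goodness of the automaton guarantees non-empty right-hand sides is a correct and worthwhile detail that the paper leaves implicit.
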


This appears to be well-known--see \cite[Theorem~38]{bloom2005equational}, where the authors attribute it to Heilbr\"unner and Courcelle. The backward implication indeed follows from Theorem~\ref{thm: delta} due to them, {though after a little work (Construction~\ref{cons: gamma}, Theorem~\ref{thm: gamma})}. We could not find an explicit proof of the forward implication either, which is the content of the next result.

\renewcommand{\thesubfigure}{(\Roman{subfigure})}
\begin{figure}[htbp] 
\centering 
\subfigure[]{\scalebox{.95}{
\begin{tikzpicture}
\node[state, initial above](q1) {};\end{tikzpicture}}}
\subfigure[]{\scalebox{.95}{
\begin{tikzpicture}
\node[state, initial above, accepting](q1) {};\end{tikzpicture}}}
\subfigure[]{\scalebox{.95}{
\begin{tikzpicture}
\node[state, initial above](root) {};
\node[state] at (-1, -2) (left) {$ M_{L_0} $};
\node[state]  at (1, -2) (right) {$ M_{L_1} $};
\draw (root) edge[left] node{0} (left);
\draw (root) edge[right] node{1} (right);
\end{tikzpicture}}}
\subfigure[]{\scalebox{.95}{
\begin{tikzpicture}
\node[state, initial above](q1) {};
\node[state, below of=q1] (q2) {$ M_{L_0} $};
\draw (q1) edge[loop right] node{1} (q1);
\draw (q1) edge[right] node{0} (q2);
\end{tikzpicture}}}
\subfigure[]{\scalebox{.96}{
\begin{tikzpicture}
\node[state, initial above](q1) {};
\node[state, below of=q1] (q2) {$ M_{L_0} $};
\draw (q1) edge[loop left] node{0} (q1);
\draw (q1) edge[right] node{1} (q2);
\end{tikzpicture}}}
\subfigure[]{

\scalebox{.7}{
\begin{tikzpicture}
    \node[state, initial above](m) {};
    \node[state] at (-3, 1) (l1) {};
    \node[state] at (-2, -1) (l2) {};
    \node[state] at (3, 1) (r1) {};
    \node[state] at (2, -1) (r2) {};
    \node[state] at (-3, -2.7) (l3) {$ M_{L_0} $};
    \node[state] at (3, -2.7) (r3) {$ M_{L_1} $};
    
   \draw (m) edge[bend right, above] node{0} (l1);
    \draw (m) edge[bend left, above] node{1} (r1);
    
    \draw (l1) edge[bend right, left] node{1} (l2);
    \draw (r1) edge[bend left, right] node{0} (r2);
    
    \draw (l1) edge[bend right, below] node{0} (m);
    \draw (r1) edge[bend left, below] node{1} (m);
    
    \draw (l2) edge[bend right, below] node{1} (m);
    \draw (r2) edge[bend left, below] node{0} (m);

    \draw (l2) edge[left] node{0} (l3);
    \draw (r2) edge[right] node{1} (r3);
    
\end{tikzpicture}}}

\caption{(I) $ M_\mbf 0 $, (II) $ M_\mbf 1 $, (III) $ M_{{L_0} + {L_1}} $, (IV) $ M_{{L_0} \cdot \omega} $, (V) $ M_{{L_0} \cdot \omega^*} $, (VI) $ M_{\Xi({L_0}, {L_1})} $}
\label{fig:automata}
\end{figure}
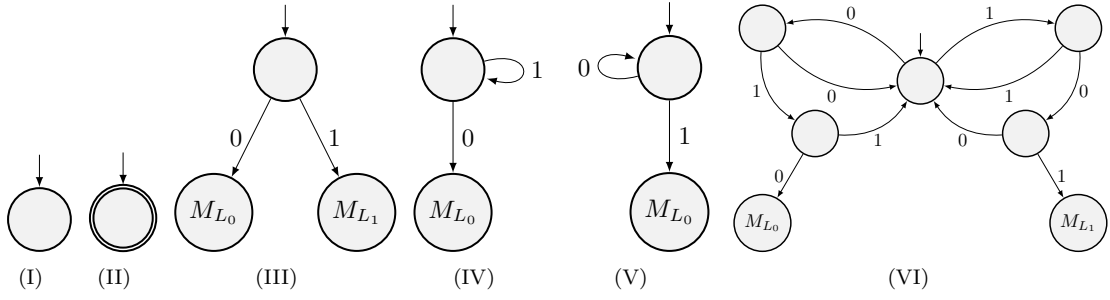
\begin{customconsthm}{$ \epsilon $}
\label{consthm: epsilon}
\label{prop: automata for LOfd}

Let $ L_0, L_1 $ be the linguages of finite automata $ M_{L_0}, M_{L_1} $ respectively.
For each $ J \in \{\mbf 0, \mbf 1, {L_0} + {L_1}, {L_0} \cdot \omega, {L_0} \cdot \omega^*, \Xi({L_0}, {L_1}) \} $, let $ M_J $ be as in Figure \ref{fig:automata}. Then the following hold.
\vspace{2pt}
\begin{minipage}{0.25\textwidth}
    
\begin{enumerate}[label=(\Roman*)]
    \item $ \lin(M_\mbf 0) \cong \mbf 0 $.
    \item $ \lin(M_\mbf 1) \cong \mbf 1 $.
    \end{enumerate}
\end{minipage}
\begin{minipage}{0.3\textwidth}
\begin{enumerate}[label=(\Roman*)]
  \setcounter{enumi}{2}
  \item $ \lin(M_{{L_0} + {L_1}}) \cong {L_0} + {L_1} $.
    \item $ \lin(M_{{L_0} \cdot \omega}) \cong {L_0} \cdot \omega $
    \end{enumerate}
\end{minipage}
\begin{minipage}{0.35\textwidth}
    \begin{enumerate}[label=(\Roman*)]
    \setcounter{enumi}{4}
    \item $ \lin(M_{{L_0} \cdot \omega^*}) \cong L_0 \cdot \omega^* $.
    \item $ \lin(M_{ \Xi({L_0}, {L_1})}) \cong \Xi({L_0}, {L_1}) $.
    \end{enumerate}
\end{minipage}
\end{customconsthm}

To construct an automaton that performs $ n $-ary shuffles for arbitrary finite $ n \ge 1 $,  
take a binary tree with $ 2n + 1 $ leaf nodes $ v_0, v_1, \dots, v_{2n} $ from left to right, identify every even node with the root and label the $ (2i + 1)^{th} $ node $ M_i $, where $ M_i $ is the automaton for the $ i^{th} $ linear order being shuffled.

We now show that the linguage of any automaton is isomorphic to the linear order underlying a component of the universal solution of a word problem in the alphabet $ \{{\star}\} $. Let us outline the construction of the word problem associated with an automaton, together with a proof of its correctness for the sake of completeness.

\begin{customcons}{$ \gamma $}
\label{cons: gamma}
Let $ M $ be an automaton with $ n $ states $ q_1, q_2, \dots, q_n $ of which $ q_1 $ is the start state. The \emph{word problem associated to $ M $}, denoted $ W_M $, is the system $\lan v_i = v_{i0} b_i v_{i1}\mid 1\leq i\leq n \ran$ of equations over $ \Gamma := \{{\star}\}$, 
    where for any $1\leq i \leq n$ and $ d \in \{0, 1\} $, 
    \begin{align*}
        v_{id} & := \begin{cases}
            v_j & \text{if $  \delta(q_i, d) $ is defined to be equal to $ q_j $};\\
            \beps & \text{otherwise,}
        \end{cases} \text{ and \hspace{4pt}}      b_i := \begin{cases}
            {\star} & \text{if $ v_i $ is an accept state};\\
            \beps & \text{otherwise.}
        \end{cases}
    \end{align*}
\end{customcons}
\begin{figure}[htbp]
        \centering
        \begin{tikzpicture}
    \node[state,  accepting, initial left] at (0, 2)(root)  {$ 1_{(v, 1)} $};
\node[state, accepting] at (2, 2) (first) {$ b^- $};
\node[state, accepting] at (4, 2) (second) {$ b^-b^- $};
\node[state, accepting] at (2, 0) (third) {$ ab^- $};
\node[state, accepting] at (0, 0) (left) {$ a $};
\node[state, accepting] at (4, -1) (fourth) {$ b^-a $};
\draw (root) edge[above] node{1} (first);
\draw(root) edge[left] node{0} (left);
\draw(first) edge[left] node{0} (third);
\draw(second) edge[above] node{0} (third);
\draw(fourth) edge[right] node{1} (second);
\draw(third) edge[above, bend right] node{1} (fourth);
\draw(fourth) edge[above, bend right] node{0} (third);
\draw (first) edge[above] node{1} (second);
\draw(left) edge[below, bend right] node{1} (fourth);
\end{tikzpicture}
\caption{Automaton $ M' $ (see Example~\ref{example: beta' on gp23} for an explanation on the labelling of nodes)}
        \label{fig: beta' on gp23}
    \end{figure}
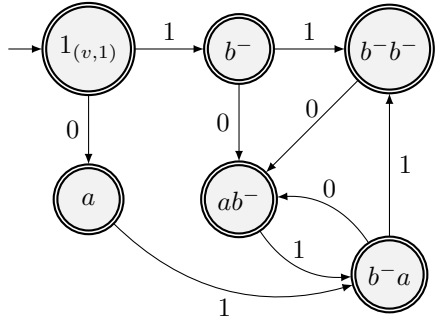
\begin{example}
Applying Construction~\ref{cons: gamma}  to the automaton in Figure~\ref{fig: beta' on gp23} gives us the following word problem over the alphabet $ \Gamma = \{{\star}\} $ in six unknowns: $ \lan u = u'{\star} v, v = u'{\star}, w = w'{\star} u, u' = {\star} v', v' = u'{\star} v, w' = {\star} v' \ran $. A few substitutions give us $  v' = u $, $ u'=w' = {\star} u $, $ v = {\star} u {\star} $, $ w = {\star} u {\star} u $, and $ u = {\star} u{\star} {\star} u{\star} $. The universal solution given by Heilbr\"unner's algorithm has $ u = (\omega + \zeta \cdot \eta + \omega^*, \chi:\omega + \zeta \cdot \eta + \omega^* \to \{{\star}\}) $.
\end{example}

The next result is the proof of correctness of Construction \ref{cons: gamma}.

\begin{customthm}{$ \gamma $}
    \label{thm: gamma}
    Continuing with the notations in Construction~\ref{cons: gamma}, let $ \lan A_1, A_2, \dots, A_n\ran $ be the universal solution to $ W_M $ and let $ L_1 $ be the linear order underlying $ A_1 $. Then $ L_1 \cong \lin(M)$.
\end{customthm}
\begin{proof}
	Let $ g : \two^* \to \omega^{< \omega} $ be the unique concatenation-preserving map sending $ 0 $ to $ 0 $ and $ 1 $ to $ 2 $.

	Let $ (T, \chi:T\to \{{\star}\}\sqcup\{v_1,v_2,\dots,v_n\}) $ be the unique labelled tree such that the following hold for each $ y \in  T, x \in \two^* $ and $ 1 \le i \le n $:
 
    \begin{minipage}{0.5\textwidth}
	\begin{itemize}
		\item $ \chi(\beps) = v_1 $.
            \item $ \deltahat(q_1, x) $ is defined $ \iff g(x) \in T $.
            \item $ x \in L(M) \iff g(x)1 \in T $. 
            
            \end{itemize}
            \end{minipage}%            
    \begin{minipage}{0.5\textwidth}
            \begin{itemize}
            
		\item $ y \in \{g(z)1, g(z)\} $ for some $ z \in \two^* $.
            \item $ \deltahat(q_1, x) = q_i \implies  \chi(g(x)) = v_i $
            \item $ x \in L(M) \implies \chi(g(x)1) = {\star} $. 
	\end{itemize}
 \end{minipage}

	Let $ (T_1, \chi_1) $ be the $ \{{\star}\} \sqcup \{v_1, v_2, \dots, v_n\} $-labelled tree corresponding to $ v_1 $ produced upon applying Construction~\ref{cons: delta} to the word problem $ W_M $. 
	Notice that the root of $ T $ is labelled $ v_1 $ and every node labelled $ v_i $ has left child $ v_{i0} $ or none, if nonexistent, right child $ v_{i1} $ or none, if nonexistent, and middle child $ {\star} $, if $b_i = {\star} $, or none, if not. 
 Thus, it is clear that there exists a label-preserving, $ \subseteq $-preserving, and $ \preorder $-preserving bijection $ (T, \chi) \to (T_1, \chi_1) $. Thus $ (\Front(T), \chi) \cong (\Front(T_1), \chi_1) \cong L_1 $, where the second isomorphism is due to Theorem~\ref{thm: courcelle frontier}.
   It suffices to construct a function $  f_M : \two^* \to \omega^{<\omega} $ with $ f_M(L(M)) = \Front(T) $ so that the induced map $ f_M : \lin(M) \to (\Front(T), \preorder) $ is order-preserving.

    Let $ f_M $ be defined by $f_M(x):= \begin{cases}
        g(x) 1 & \text{if $ \delta(q_s, x) $ is defined and is an accept state};\\
        g(x) & \text{otherwise.}
    \end{cases}$
    
    We claim that $ f_M $ is as required.
    There are three things to check.
    \begin{itemize}
        \item $ \Front(T) \subseteq f_M(L(M)) $: If $ y \in \Front(T) $, then it must be the case that $ y = g(x)1 $ for some $ x \in \two^* $. For otherwise, $ y = g(x) $ for some $ x \in \two^* $. Let $ q_i = \deltahat(q_s, x) $. Using our assumption that the automata we work with are good (Remark~\ref{rem: good automata}), choose and fix a word $ z \in L(M) $ extending $ x $, i.e., either $ z = x $ so that $ g(x)1 \in T $, or $ x \subsetneq z $ and $ y = g(x) \subsetneq g(z) \in T $. In either case, our supposition that $ y = g(x) $ is a leaf node is contradicted. 
        Thus $ y = g(x) 1 $ for some $ x \in \two^* $, which means that $ x \in L(M) $, $ f_M(x) = g(x)1 = y $ and $ y \in f_M(L(M)) $.

        \item $ f_M(L(M)) \subseteq \Front(T) $: For any  $ x \in L(M) $, $ f_M(x) = g(x)1 \in T $. If $ f_M(x) \not\in \Front(T) $, then there must exist a proper extension $ y $ of $ f_M(x) $ in $ T $. Either $ y = g(z) $ or $ y = g(z)1 $ for some $ z \in \two^* $. In either case $ f_M(x) \subseteq g(z) $, contradicting the fact that $ f_M(x) $ ends in a $ 1 $.
        
        \item $ f_M $ is order-preserving: Let $ x, y \in L(M) $ with $ x \inorder y $ and fix the longest $ w $ for which $ x = wx' $ and $ y = wy' $ for some $ x', y' $. By the definition of $ \inorder $, either $ x' $ begins with a $ 0 $, or $ y' $ begins with a $ 1 $. Without loss of generality, assume the former, so $ y' $ is either $ \beps $ or begins with a $ 1 $. Then $ x = w0x'' $ and $ y = wy' $, so $ f_M(x) = g(w)0g(x'')1 $ and $ f_M(y) = g(w)g(y') $, where $ g(y') $ is either empty or begins with a $ 2 $. Thus $ g(w) $ is the longest word with $ f_M(x) = wf $ and $ f_M(y) = wf' $ and $ f $ begins with a $ 1 $, so that $ f_M(x) \preorder f_M(y) $.
    \end{itemize}
\end{proof}

It is possible to detect whether an automaton has a finite or scattered linguage without having to calculate the latter completely.

\begin{defn}
    Given a state $ p $ in an automaton $ M $ over $ \two $, say that $ p $ is 
    \begin{itemize}
        \item an $ \omega $-state if there exists a non-empty string $ x $ such that $ \deltahat(p, x) = p $;
        \item an $ \overline\omega $-state if there is a string $ x $ such that $ \deltahat(p, x) $ is an $ \omega $-state;
        \item an $ \eta $-state if there exist  strings $ x_0, x_1 $ such that $ \deltahat(p, 0x_0) = \delta(p, 1x_1) = p $; and
        \item an $ \overline\eta $-state if there is a string $ x $ such that $ \deltahat(p, x) $ is an $ \eta $-state.
    \end{itemize}
\end{defn}
\begin{rem}
    Note that ($ p $ is a  $ \xi $-state  $ \implies  p $ is a $ \zeta $-state) holds for $ (\xi, \zeta) \in \{(\eta, \overline \eta), (\overline \eta, \overline \omega), (\eta, \overline \omega), (\eta, \omega), (\omega, \overline \omega)\} $.
\end{rem}

We will use the next result in a later section.
\begin{prop}
\label{prop: etabar and omegabar}
    Let $ M $ be an automaton with start state $ q_s $. Then $ \lin(M) $ is 
    \begin{itemize}
        \item non-scattered if and only if $ q_s $ is an $ \overline\eta $-state.
        \item infinite if and only if $ q_s $ is an $ \overline\omega $-state.
    \end{itemize}
\end{prop}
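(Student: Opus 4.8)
The plan is to prove the two equivalences separately. Throughout, by Remark~\ref{rem: good automata} we may and do assume $M$ is good, so that every state is reachable from $q_s$ and co-reaches an accept state; in particular $L(M)\neq\emptyset$, and every state of $M$ labels some node of the Courcelle tree obtained by applying Construction~\ref{cons: delta} to the word problem $W_M$ of Construction~\ref{cons: gamma}.

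\emph{The infinite/$\overline\omega$ equivalence.} This one is elementary. If $q_s$ is an $\overline\omega$-state, fix $u$ with $\deltahat(q_s,u)=p$ for an $\omega$-state $p$, a non-empty $w$ with $\deltahat(p,w)=p$, and, using goodness, a $z$ with $\deltahat(p,z)\in F$; then $\{\,u w^m z\mid m\in\N\,\}$ is an infinite subset of $L(M)$, so $\lin(M)$ is infinite. Conversely, if $\lin(M)$ is infinite then $L(M)$ contains a word $x$ with $|x|\ge|Q|$, and its accepting run repeats a state, which is then a reachable $\omega$-state, so $q_s$ is an $\overline\omega$-state. No difficulty is expected here.

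\emph{The non-scattered/$\overline\eta$ equivalence.} Here I route through $W_M$. Let $\langle A_1,\dots,A_n\rangle$ be its universal solution and $L_i$ the linear order underlying $A_i$, so $L_1\cong\lin(M)$ by Theorem~\ref{thm: gamma}. First I would note that, since each state $q_j$ is reachable, $v_j$ labels some node $\nu$ of the tree $T_1$, and by Construction~\ref{cons: delta} the subtree of $T_1$ rooted at $\nu$ is a copy of $T_j$; hence $L_j\cong\Front(T_j)$ embeds as a convex subset of $\Front(T_1)\cong L_1$, so $L_1$ is scattered iff every $L_i$ is, which by Courcelle's dichotomy (Theorem~\ref{prop: quasi-rational systems}) happens iff $W_M$ is quasi-rational. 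Next I would translate quasi-rationality into automaton terms: since $M$ is good every $T_i$ has a leaf labelled $\star$ (its $\star$-leaves are indexed by the words taking $q_i$ into $F$, of which there is at least one), so $W_M$ is quasi-rational exactly when no $T_i$ has two $\subseteq$-incomparable nodes labelled $v_i$; via the bijection $g:\two^*\to\{0,2\}^*$ from the proof of Theorem~\ref{thm: gamma}, those nodes correspond to words fixing $q_i$, so the \emph{failure} of quasi-rationality amounts to the existence of an index $i$ and words $x,y$ over $\two$, neither a prefix of the other, with $\deltahat(q_i,x)=\deltahat(q_i,y)=q_i$. Finally I would show such a pair exists iff some $\eta$-state is reachable from $q_s$: writing $x=c\,d_x\,x'$ and $y=c\,d_y\,y'$ with $c$ the longest common prefix and $\{d_x,d_y\}=\{0,1\}$, the state $q_j:=\deltahat(q_i,c)$ carries the loops $d_xx'c$ and $d_yy'c$, whose first letters are $0$ and $1$, so $q_j$ is a reachable $\eta$-state; conversely an $\eta$-state $p$ itself carries two loops starting with $0$ and $1$, so one may take $i$ with $q_i=p$ and $x,y$ those loops. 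Chaining these equivalences with $L_1\cong\lin(M)$ closes the argument.

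I expect the middle steps — the translation of quasi-rationality into the loop structure of $M$ — to be the delicate part: one must carefully track the correspondence between words over $\two$ and nodes of the Courcelle trees through $g$, and, crucially, use goodness to guarantee that every $T_i$ carries a $\star$-leaf; without this, quasi-rationality would be blind to $\eta$-loops that cannot reach $F$ and the stated equivalence would fail. One is tempted instead to build a direct embedding $\eta\hookrightarrow\lin(M)$ from a reachable $\eta$-state by substituting the two loops at $p$ for the bits of a dyadic code; this works cleanly when $p\in F$, but in general the accepting tail-word one must append to land in $L(M)$ interferes with the inorder comparison, so passing through $W_M$ is the safer route.
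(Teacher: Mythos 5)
Your proposal is correct and follows essentially the same route as the paper, which proves the first bullet by passing through the word problem $W_M$, Theorem~\ref{thm: gamma} and Courcelle's quasi-rationality criterion (Theorem~\ref{prop: quasi-rational systems}), and the second by the pigeonhole principle; you have simply supplied the details (including the necessary appeal to goodness, without which the statement would fail) that the paper leaves implicit.
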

Theorem~\ref{prop: quasi-rational systems} yields the first assertion while the second follows from the pigeonhole principle. 

\section*{$ \beta' : $ Hammocks as linguages}
\label{subsection: hammocks as linguages}
\label{section: beta'}
Fix a string algebra $ \Lambda = \mathcal K \mathcal Q / \lan \rho \ran $ and a vertex $ v \in Q_0 $. 
We present the construction of an automaton $ M'_\Lambda(v) $ over $ \two $ with $ \lin(M'_\Lambda(v)) \cong H_l(v) $.
For this, we start by associating to every string $ \xx \in H_l(v) $ a binary word $ \sgn(\xx) $, called its \emph{sign sequence}.

\begin{defn}
  The function $ \sgn : H_l(v) \to \two^* $ is defined recursively by \[ \sgn(\xx) := \begin{cases} 
    \beps & \text{if $ \xx=1_{(v,1)}$;}\\ 
    \sgn(\yy)0 & \text{if $ \xx = \alpha\yy $ for $ \alpha \in  Q_1 $;}\\ 
    \sgn(\yy)1 & \text{if $ \xx = \alpha\yy $ for $ \alpha \in Q_1^{-}$}. 
    \end{cases}\]
\end{defn}
Notice that even though a string grows on the left, the function $\sgn$ grows the corresponding word on the right. The proof of the next result is straightforward.
\begin{prop}
\label{prop: sgn preserves order}
The map $ \sgn : (H_l(v), <_l) \to (\two^*, \inorder) $ is an order embedding, i.e., it preserves and reflects order relations .
\end{prop}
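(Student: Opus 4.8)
The plan is to unwind the recursive definitions of both $\sgn$ and the order $<_l$ and match them syllable by syllable. First I would fix two distinct strings $\xx,\yy\in H_l(v)$ and let $\ww$ be the longest common (rightmost) ``tail'', so that $\xx=\xx'\ww$ and $\yy=\yy'\ww$ with $\xx',\yy'$ having no common rightmost syllable (one of them may be the empty/lazy string $1_{(v,1)}$, but not both since $\xx\neq\yy$). By an easy induction on the length of $\ww$ using the recursive clauses for $\sgn$, one shows $\sgn(\ww')=\sgn(\ww)\,s$ whenever $\ww'=\alpha\ww$, hence $\sgn(\xx)=\sgn(\xx')\cdot\sgn\text{-suffix}$ where reading off the syllables of $\ww$ from right to left appends a block $u$ to both $\sgn(\xx)$ and $\sgn(\yy)$; more precisely $\sgn(\xx)=\sgn(\xx')u$ and $\sgn(\yy)=\sgn(\yy')u$ for the common word $u$ coming from $\ww$. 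The key point is that $\sgn(\ww)$ depends only on $\ww$, so this common suffix is literally equal on both sides.

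Next I would identify the ``longest common prefix'' in the $\inorder$ sense of $\sgn(\xx)$ and $\sgn(\yy)$. Since $\sgn$ appends the letter recording the direct/inverse type of the newly-attached leftmost syllable \emph{on the right}, and the strings grow on the left, the common tail $\ww$ of the strings becomes a common \emph{suffix} $u$ of the sign words, while the divergence happens ``at the front'' of the strings, i.e. after the prefix coming from $\ww$ — but in $\two^*$ the relevant longest common prefix for $\inorder$ is exactly $\sgn(\ww)$ read appropriately. Concretely: if $\xx'=1_{(v,1)}$ then $\sgn(\xx)=\sgn(\ww)$ is a proper prefix-block of $\sgn(\yy)=\sgn(\yy')$-part followed by $\sgn(\ww)$ — here I must be careful about orientation, so the cleanest route is to prove directly that for $w:=\sgn(\ww)$ we have $\sgn(\xx)=w x^\ast$ and $\sgn(\yy)=w y^\ast$ where $x^\ast$ is empty iff $\xx=\ww$ and otherwise \emph{begins} with the letter recording the rightmost syllable of $\xx'$, i.e. $0$ if that syllable is direct and $1$ if inverse; likewise for $y^\ast$. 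Then by the definition of $\inorder$, $\sgn(\xx)\inorder\sgn(\yy)$ iff ($x^\ast$ begins with $0$) or ($y^\ast$ begins with $1$), i.e. iff the rightmost syllable of $\xx'$ is direct or the rightmost syllable of $\yy'$ is inverse.

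Finally I would compare this with the definition of $<_l$: $\xx<_l\yy$ iff, for the longest $\ww$ with $\xx=\xx'\ww$, $\yy=\yy'\ww$, either $\xx'$ has a direct rightmost syllable or $\yy'$ has an inverse rightmost syllable. This is word-for-word the condition obtained above (with the empty-$\xx'$ / empty-$\yy'$ degenerate cases handled by the convention $\tau(\xx_2)=1$ and the sign clause $\sgn(1_{(v,1)})=\beps$), so $\xx<_l\yy\iff\sgn(\xx)\inorder\sgn(\yy)$; injectivity of $\sgn$ is immediate since distinct $\xx,\yy$ give $x^\ast\neq y^\ast$ at the divergence point. The main obstacle is purely bookkeeping: keeping the ``strings grow on the left, sign words grow on the right'' orientation straight, and correctly matching ``longest common tail of strings'' with ``longest common prefix of sign words under $\inorder$'', including the boundary cases where one of $\xx',\yy'$ is the lazy string. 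I expect no conceptual difficulty beyond that; the proof is an induction plus a careful case split on the type of the divergence syllables.
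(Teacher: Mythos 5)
The paper omits the proof of this proposition as ``straightforward'', and your plan --- unwind both definitions, write $\sgn(\xx)=w\,x^{\ast}$ and $\sgn(\yy)=w\,y^{\ast}$ with $w:=\sgn(\ww)$ for the longest common right factor $\ww$, and observe that $x^{\ast}$ (resp.\ $y^{\ast}$) is empty iff $\xx'$ (resp.\ $\yy'$) is trivial and otherwise begins with the sign of the rightmost syllable of $\xx'$ (resp.\ $\yy'$) --- is exactly the intended argument. One bookkeeping slip: the display in your first paragraph, $\sgn(\xx)=\sgn(\xx')u$, has the orientation backwards; since $\sgn(\alpha\yy)=\sgn(\yy)\sgn(\alpha)$, the contribution of the common tail $\ww$ is a common \emph{prefix} of the sign words, not a suffix. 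You catch this yourself in the second paragraph, so it does no harm.

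The one step you assert but do not justify is that $w=\sgn(\ww)$ is the \emph{longest} common prefix of $\sgn(\xx)$ and $\sgn(\yy)$, which is what the definition of $\inorder$ quantifies over. Your equivalence ``$\sgn(\xx)\inorder\sgn(\yy)$ iff $x^{\ast}$ begins with $0$ or $y^{\ast}$ begins with $1$'' would fail if $x^{\ast}$ and $y^{\ast}$ both began with the same letter, since then the actual point of divergence of the two binary words would lie strictly beyond $w$. This cannot happen, but it needs a sentence: by the string-algebra axioms (and the $\sigma,\tau$ conditions when $\ww$ is a lazy string) at most one direct and at most one inverse syllable can be prepended to $\ww$ to yield a string, so when $\xx'$ and $\yy'$ are both nontrivial their rightmost syllables are distinct left extensions of $\ww$ and hence have opposite signs; thus $x^{\ast}$ and $y^{\ast}$ begin with different letters and $w$ is indeed the longest common prefix. (This is the same fact that makes $<_l$ antisymmetric in the first place, so you may regard it as already granted, but it is doing real work here and should be stated.) With that observation inserted, your case split matches the definition of $<_l$ clause for clause and the injectivity claim follows as you say.
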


We now construct an automaton with language $ \sgn(H_l(v)) \subseteq \two^* $. The automaton will simulate walking on the quiver $ \mathcal Q $ along arrows or their inverses, while storing sufficiently many syllables in its memory to be able to detect the formation of blocking relations.
A key role is played here by the finiteness of the set $ \rho $ of blocking relations.

\def\rmshort{\mathrm{short}}

\begin{defn}
    Let $ \short(\Lambda) \subseteq \St(\Lambda) $ be the set of \emph{short strings}, i.e., those strictly shorter than the length of the longest relation in $ \rho $. 
    For any string $ \xx \in \St(\Lambda) $, let $  [\xx] $ denote the longest string $ \yy \in \short(\Lambda) $ such that $ \xx = \yy \zz $ for some string $ \zz $.
    Let $ \rmshort(\Lambda) := \abs{\short(\Lambda)} $.
\end{defn}

\begin{rem}
\label{rem: short}
Even though $ \St(\Lambda) $ could be infinite, $\short(\Lambda)$ is finite (with ${\rmshort(\Lambda)} \le \abs{Q_0}(2^{r+2} - 1) $, where $r$ is the length of the longest relation in $\rho$), and will serve as the set of states for our automaton.
\end{rem}

\begin{customcons}{$ \beta' $}
\label{defn: our automaton}
\label{cons: beta'}
    Let $ M'_\Lambda(v) $ be the automaton over $\two$ given by the data $ (\short(\Lambda), 1_{(v, 1)}, \short(\Lambda), \delta) $, where $ \delta : \short(\Lambda) \times \two \to \short(\Lambda) $ is defined as
    \[\delta(\xx,c) := \begin{cases}
          [\beta\xx] &\text{if there exists $ \beta \in Q_1\sqcup Q_1^- $ with $\sgn(\beta)=c$ and $ \beta\xx \in \St(\Lambda) $};\\
        \text{undefined} &\text{otherwise}.
    \end{cases}\]
\end{customcons}

The partial map $ \delta $ is well defined because any such $ \beta $, if it exists, is unique.

\begin{example}
\label{example: beta' on gp23}
Construction~\ref{cons: beta'} applied to the data consisting of the string algebra $ GP_{2, 3} $ from Example~\ref{example: string algebra} together with the only vertex $v$, yields the automaton in Figure~\ref{fig: beta' on gp23}, where each  node represents the string by which it is labelled. 
\end{example}

We now show that this automaton indeed accepts only the sign sequence of strings in $ H_l(v) $.
\begin{thm}
\label{thm: automaton accepts sgns of strings}
  For any $ x \in \two^* $, $ x\in L(M'_\Lambda(v))$ if and only if there is $ \xx \in H_l(v) $ with $ \sgn(\xx) = x $.
\end{thm}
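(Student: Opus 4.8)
The plan is to prove both implications by induction on word length, carefully tracking the correspondence between a word $x$ accepted by $M'_\Lambda(v)$ and a string $\xx \in H_l(v)$ with $\sgn(\xx) = x$. The key invariant to maintain is the following: for any word $x \in \two^*$, the run $\deltahat(1_{(v,1)}, x)$ is defined if and only if there exists a string $\xx$ with $t(\xx) = v$, $\tau(\xx) = 1$ and $\sgn(\xx) = x$, and in that case $\deltahat(1_{(v,1)}, x) = [\xx]$. Since every state of $M'_\Lambda(v)$ is an accept state, acceptance of $x$ is equivalent to $\deltahat(1_{(v,1)}, x)$ being defined, so the theorem follows immediately from this invariant once we also observe that $H_l(v)$ consists precisely of strings $\xx$ with $t(\xx) = v$ and $\tau(\xx) = 1$ (by the setup preceding the definition of $H_l(v)$, where $\xx_1$ ranges over strings with $t(\xx_1) = v$ and we normalized $\tau(\xx_2) = 1$, i.e. $\xx_1$ such that $1_{(v,1)}\xx_1$ makes sense --- one should double-check the orientation convention here).

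First I would set up the inductive claim and handle the base case $x = \beps$: here $\deltahat(1_{(v,1)}, \beps) = 1_{(v,1)}$, which is defined, and the unique string with empty sign sequence and the right endpoint data is $1_{(v,1)}$ itself, with $[1_{(v,1)}] = 1_{(v,1)}$. For the inductive step, write $x = x'c$ with $c \in \two$. By definition $\deltahat(1_{(v,1)}, x'c) = \delta(\deltahat(1_{(v,1)}, x'), c)$, defined exactly when $\deltahat(1_{(v,1)}, x')$ is defined --- say it equals the short string $[\yy]$ for the string $\yy$ with $\sgn(\yy) = x'$ --- and there is a (necessarily unique) $\beta \in Q_1 \sqcup Q_1^-$ with $\sgn(\beta) = c$ and $\beta[\yy] \in \St(\Lambda)$. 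The crucial point to verify is that $\beta[\yy] \in \St(\Lambda)$ if and only if $\beta\yy \in \St(\Lambda)$: this is where the definition of "short" does its work. Since $\rho$ consists of paths of length at most $r$, whether prepending $\beta$ creates a forbidden subword (a relation or an inverse-cancellation $\beta = \alpha_1^-$ where $\yy = \alpha_1\cdots$) depends only on the last $r$ syllables of $\yy$ --- and $[\yy]$, being the longest short prefix, retains enough of them (all of them, as it has length $\geq r$ unless $\yy$ itself is short, in which case $[\yy] = \yy$). I would isolate this as a small lemma: $\beta\yy \in \St(\Lambda) \iff \beta[\yy] \in \St(\Lambda)$, and moreover in that case $[\beta\yy] = [\beta[\yy]]$.

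Given that lemma, the inductive step closes cleanly: if $x = x'c$ is accepted, then $\beta\yy \in \St(\Lambda)$ where $\beta$ is the unique arrow-or-inverse with $\sgn(\beta) = c$, so $\xx := \beta\yy$ satisfies $\sgn(\xx) = \sgn(\yy)c = x'c = x$ and still has $t(\xx) = t(\yy) = v$, $\tau(\xx) = \tau(\yy) = 1$; and $\deltahat(1_{(v,1)}, x) = [\beta[\yy]] = [\beta\yy] = [\xx]$, maintaining the invariant. Conversely, if $\xx \in H_l(v)$ has $\sgn(\xx) = x = x'c$, then $\xx$ has positive length and $\xx = \beta\yy$ for some $\beta \in Q_1 \sqcup Q_1^-$ with $\sgn(\beta) = c$; then $\yy$ is a string with $t(\yy) = v$, $\tau(\yy) = 1$ and $\sgn(\yy) = x'$, so by induction $x'$ is accepted and $\deltahat(1_{(v,1)}, x') = [\yy]$; since $\beta\yy \in \St(\Lambda)$, the lemma gives $\beta[\yy] \in \St(\Lambda)$, so $\delta([\yy], c)$ is defined and $x = x'c$ is accepted.

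I expect the main obstacle to be the short-string lemma --- specifically, pinning down exactly how much of the tail of a string is needed to decide stringhood after prepending one syllable, and confirming that $[\yy]$ always retains that much. The subtlety is that a blocking relation in $\rho$ has length at most $r$, so a freshly-created forbidden subword $\beta\alpha_1\cdots\alpha_j$ (or its inverse) has $j + 1 \leq r$, hence involves at most the last $r-1$ syllables of $\yy$; since short strings have length up to $r-1$, the prefix $[\yy]$ is long enough. One must also handle the inverse-cancellation condition ($\alpha_i = \alpha_{i+1}^-$), which is purely local to the first syllable of $\yy$ and so is certainly visible in $[\yy]$. A minor additional check: the set of short strings really is closed under the operation $\yy \mapsto [\beta\yy]$, i.e. $\delta$ genuinely lands in $\short(\Lambda)$, which is immediate from the definition of $[\cdot]$. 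I would also double check the endpoint/orientation bookkeeping ($s$, $t$, $\sigma$, $\tau$ of $\beta\yy$ versus $\yy$) against the conventions fixed earlier, as the direction in which strings and sign sequences grow is a frequent source of sign errors.
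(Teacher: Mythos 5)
Your proof is correct and follows essentially the same route as the paper's: induction on word/string length maintaining the invariant $\deltahat(1_{(v,1)},\sgn(\xx))=[\xx]$, with the forward direction amounting to the paper's minimal-counterexample argument. The one difference is that you isolate and justify the lemma $\beta\yy\in\St(\Lambda)\iff\beta[\yy]\in\St(\Lambda)$ (with $[\beta\yy]=[\beta[\yy]]$), which the paper uses silently when it passes from the definedness of $\delta([\yy],u)$ to the existence of $\beta$ with $\beta\yy\in\St(\Lambda)$; this is a genuine improvement in completeness, not a change of method (and note the minor slip: $[\yy]$ has length at most $r-1$, not ``$\geq r$'', which your later, correct counting already reflects).
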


\begin{proof}
For the backward direction, a simple induction on the length of $\xx\in H_l(v)$ shows that $\deltahat(1_{(v,1)}, \sgn(\xx))=[\xx]$, and hence $\sgn(\xx)\in L(M'_\Lambda(v))$. 

  For the forward direction, suppose for the sake of contradiction that $ x \in L(M'_\Lambda(v)) $ is of minimal length such that there is no $ \xx \in H_l(v) $ with $ \sgn(\xx) = x $. Clearly $ x \not= \beps $, so $ x = yu $  for some $ u \in \two $.
  The minimality of $ x $ then implies that there is $ \yy \in H_l(v) $ with $ \sgn(\yy) = y $. Since $ \deltahat(1_{(v, 1)}, x) = \delta(\deltahat(1_{(v, 1)}, y), u) $ is defined, there exists a unique $ \beta \in Q_1 \sqcup Q_1^- $ with $ \beta\yy \in \St(\Lambda) $ and $ \sgn(\beta) = u $. But then $ \sgn(\beta\yy) = yu=x $, contradicting our choice of $ x $, thereby completing the proof.
\end{proof}

The next result is the combination of Proposition \ref{prop: sgn preserves order} and Theorem \ref{thm: automaton accepts sgns of strings}, and is the proof of correctness of Construction \ref{cons: beta'}.
\begin{customthm}{$ \beta' $}
\label{cor: hammocks as linguages}
\label{thm: beta'}
   The map $ \sgn : (H_l(v), <_l) \to \lin(M'_\Lambda(v)) $ is an order-preserving bijection. 
\end{customthm}
The above result together with Proposition \ref{prop: etabar and omegabar} yields the following.
\begin{cor}
The algebra $\Lambda$ is a non-domestic string algebra if and only if the start state of the automaton $M'_\Lambda(v)$ is an $\overline\eta$-state for some $v\in Q_0$.    
\end{cor}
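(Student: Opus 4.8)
The plan is to recognise both sides of the equivalence as non-scatteredness statements and to connect them through the two identification results already in hand. Fix $v\in Q_0$. By Theorem~\ref{thm: beta'}, $\sgn$ is an order isomorphism $(H_l(v),<_l)\to\lin(M'_\Lambda(v))$, and since scatteredness is an order-isomorphism invariant, $H_l(v)$ is non-scattered precisely when $\lin(M'_\Lambda(v))$ is non-scattered; by Proposition~\ref{prop: etabar and omegabar} the latter holds precisely when the start state $1_{(v,1)}$ of $M'_\Lambda(v)$ is an $\overline\eta$-state. Quantifying over $v$, the right-hand side of the corollary is therefore equivalent to: $H_l(v)$ is non-scattered for some $v\in Q_0$.

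It remains to prove the representation-theoretic equivalence that $\Lambda$ is non-domestic if and only if $H_l(v)$ is non-scattered for some vertex $v$. The ``if'' direction is the contrapositive of the fact (proven for domestic string algebras in \cite{SK}, cf.\ \cite{Schroer98hammocksfor}) that every hammock of a domestic string algebra is scattered; scatteredness passes to the projection $H_l(v)$, so a non-scattered $H_l(v)$ forces $\Lambda$ to be non-domestic. For the ``only if'' direction I would invoke the structure theory of hammocks for non-domestic string algebras from \cite{SSK}: when $\Lambda$ is non-domestic, the hammock $\overline H(v)$ for a suitable vertex $v$ genuinely involves a finitary shuffle rather than being a single scattered region, hence contains a copy of $\eta$, and this non-scatteredness descends through the string-module subposet $H(v)\subseteq\overline H(v)$ to its left projection $H_l(v)$. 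Concretely, the witnessing copy of $\eta$ comes from a family of strings through $v$ built by freely interleaving (rotations of) two distinct bands, whose sign sequences yield an order embedding $\eta\hookrightarrow H_l(v)$.

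The main obstacle is this last equivalence, which carries the only genuinely representation-theoretic content and must be imported with some care. One has to reconcile the a priori distinct non-scatteredness conditions on the bipartite poset $\overline H(v)$, on its string-module part $H(v)$, and on the projection $H_l(v)$, and one has to match the classification-theoretic notion of ``domestic'' with the presence or absence of shuffle structure in the hammocks. Once that equivalence is granted, the corollary is pure transport of structure along $\sgn$ and the criterion of Proposition~\ref{prop: etabar and omegabar}.
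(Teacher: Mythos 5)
Your proposal is correct and follows exactly the route the paper intends: the paper derives this corollary in one line from Theorem~\ref{thm: beta'} together with Proposition~\ref{prop: etabar and omegabar}, leaving implicit the representation-theoretic fact that a string algebra is non-domestic precisely when some hammock linear order $H_l(v)$ is non-scattered (scatteredness in the domestic case being quoted from \cite{SK}, the converse from the structure theory in \cite{SSK}). You identify and source that implicit ingredient correctly, so your write-up is, if anything, more complete than the paper's.
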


\begin{rem}\label{complexity alpha}
Order types of hammock linear orders can be explicitly computed in $ O(\rmshort(\Lambda))$ time (Remark~\ref{rem: delta}) by using Heilbr\"unner's algorithm to compute the initial solution to the word problem $ W_{M'_\Lambda(v)} $ produced by applying Construction~\ref{cons: gamma} to the automaton $ M'_\Lambda(v) $. 
The relation $\alpha=\delta\gamma\beta'$ in the quiver in Figure~\ref{fig: heart of intro} is therefore justified as Theorems \ref{thm: delta}, \ref{thm: gamma} and \ref{thm: beta'} together provide an alternate proof of Theorem~\ref{thm: alpha}. 
\end{rem}

\begin{rem}\label{rem: gen hamm}
Given a string $\xx_0$, we can change the start state of $M'_\Lambda(v)$ to $ [\xx_0]$ so as to obtain the automaton $ M'_\Lambda(\xx_0) $ satisfying $\lin(M'_\Lambda(\xx_0))\cong (\{\yy \in \St(\Lambda) \mid \exists \zz\in\St(\Lambda)(\yy = \zz\xx_0)\}, <_l) =: (H_l(\xx_0), <_l) $.    
\end{rem}

\begin{rem}
By symmetry, the right hammock $(H_r(v),<_r)$ too is computable using finite automata. By the product construction for automata, it is easy to construct an automaton over the alphabet $\two\sqcup\two = \{0_l, 1_l, 0_r, 1_r\} $ which accepts only the strings of the form $ x_lx_r $ with $ x_l \in \{0_l, 1_l\}^* $ and $ x_r \in \{0_r, 1_r\}^* $ representing the sign sequences of $ \xx_l, \xx_r $ with $ (\xx_l, \xx_r) \in H(v) $. The partial order relation $ < $ is captured as the order induced on this language by a suitably defined partial order relation $ <_{\two \sqcup \two} $ on $ (\two \sqcup \two)^* $.
\end{rem}

\section*{$ E $ : Exceptional bands}
\label{section: E}

In this section, we give an automata-theoretic proof of Theorem~\ref{thm: E}, i.e., the finiteness of the set of exceptional points, and describe an algorithm to compute this set (Remark~\ref{rem: E}), the size of which determines the error term $e$ in the stable rank computation in Theorem \ref{thm: Special}.

Here we will only talk about the intervals around band points in the left hammock $\overline H_l(v)$; a similar notion can also be defined for the right hammocks $\overline H_r(v)$.
\begin{defn}
\label{defn: exceptional bands}
Say that a band $ \bb' $ for $\Lambda$ is \emph{right exceptional} (resp. \emph{left exceptional}) if, for some $v\in Q_0$, $\xx\in H_l(v)$ and some cyclic permutation $\bb$ of $\bb'$, the interval $ (\infb, \xx] $ (resp. $ [\xx, \binf)$) in $\overline H_l(v)$ is scattered. Say that $ \bb' $ is \emph{exceptional} if it is either left or right exceptional.
\end{defn}

Since the contribution of each exceptional band point to the error term $e$ is at most $ 1 $, by symmetry it is sufficient to show that the set of left exceptional band points is finite. The following lemma will allow us to exploit the finiteness of the automata for hammock computation.
\begin{lem}
\label{lem: exceptional bands automatically}

Let $ \bb \in H_l(v) $ be a cyclic permutation of a band $ \bb' $ and, for brevity, let $ \yy $ denote $ \infb $, $ \yy(n) $ the $ n^{th} $ syllable of $ \yy $, and $ \yy_n $ the string formed by the first $ n $ syllables of $ \yy $. 
Then $ \bb' $ is left exceptional if and only if there is $ N\in\mathbb N$ such that for every $ n \ge N $ with $ \delta([\yy_n], 0) $ an $ \overline \eta $-state, the syllable $ \yy(n) $ is direct.
\end{lem}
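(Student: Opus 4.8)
The plan is to translate the geometric condition ``the interval $(\infb,\xx]$ in $\overline H_l(v)$ is scattered for some $\xx\in H_l(v)$'' into an automata-theoretic statement about the automaton $M'_\Lambda(v)$ of Construction~\ref{cons: beta'}. The key observation is that the band point $\infb$ is the limit, in the linguage $\lin(M'_\Lambda(v))\cong (H_l(v),<_l)$, of the strings $\yy_n$ obtained by reading off longer and longer prefixes of the infinite word $\yy=\infb$. Because $\bb$ is a band, the state sequence $\deltahat(1_{(v,1)},\sgn(\yy_n))=[\yy_n]$ is eventually periodic (it cycles around the loop in $M'_\Lambda(v)$ corresponding to $\bb$), so the relevant states lie in a finite cycle. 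The interval $(\infb,\xx]$ consists exactly of those strings $\zz\in H_l(v)$ with $\infb <_l \zz \le_l \xx$, and under $\sgn$ these are the binary words $w$ lying strictly to the right of the infinite branch $\sgn(\yy)$ but to the left of (or equal to) $\sgn(\xx)$ in the inorder tree.

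**The main decomposition.** First I would unwind what it means for a word $w$ accepted by $M'_\Lambda(v)$ to satisfy $\sgn(\yy) \inorder w$. Writing $\sgn(\yy)=c_1 c_2 c_3\cdots$ (the infinite sign sequence of $\yy$, where $c_n = \sgn(\yy(n))$), a finite accepted word $w$ lies inorder-above the branch $\sgn(\yy)$ precisely when, at the first coordinate $n$ where $w$ leaves the branch, either $w$ has a $1$ where $c_n=0$, or $w$ ends (is a prefix of the branch) at a node whose continuation $c_n = 1$ — i.e. $w$ branches off ``to the right'' at some level $n$. Splitting by this branching level $n$, the interval $(\infb,\xx]$ decomposes as an ordered sum $\sum_n I_n$ where $I_n$ is the contribution of words branching off at level $n$; and $I_n$ is (up to the finitely many constraints near the top coming from $\xx$, which only affect finitely many $n$) order-isomorphic to $\lin$ of the sub-automaton of $M'_\Lambda(v)$ started at $\delta([\yy_n],1)$ when $c_n = 0$, and empty when $c_n=1$. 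Now here is where the syllable $\yy(n)$ enters: $\sgn(\yy(n))$ is $0$ iff $\yy(n)$ is a direct syllable; so the ``branch-off to the right'' at level $n$ happens exactly from the state $\delta([\yy_n],0)$ when $\yy(n)$ is direct (the sibling subtree hanging off the $1$-edge), and there is no such contribution when $\yy(n)$ is inverse. So $(\infb,\xx]$ is, up to a finite head, the ordered sum over those $n$ with $\yy(n)$ direct of $\lin$ of the automaton rooted at $\delta([\yy_n],1)$, which is a subtree of the one rooted at $\delta([\yy_n],0)$.

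**Finishing via scatteredness criteria.** By Proposition~\ref{prop: etabar and omegabar}, $\lin$ of the automaton rooted at a state $p$ is non-scattered iff $p$ is an $\overline\eta$-state; and an ordered sum $\sum_n L_n$ of linear orders is scattered iff each $L_n$ is scattered (a standard fact from \cite{rosenstein}, since the index set $\omega$ is scattered and scattered orders are closed under scattered sums). Applying this to the decomposition above: $(\infb,\xx]$ is scattered iff, for all but finitely many $n$ (absorbing the finite head), no summand indexed by a direct syllable $\yy(n)$ contributes a non-scattered piece — equivalently, for all large $n$, if $\yy(n)$ is direct then $\delta([\yy_n],1)$ is not an $\overline\eta$-state. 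To convert this into the exact statement of the lemma (which quantifies over $n$ with $\delta([\yy_n],0)$ an $\overline\eta$-state and demands $\yy(n)$ direct), I would observe: if $\yy(n)$ is inverse, then $\sgn(\yy(n))=1$ and the transition taken by $\yy$ at level $n$ is $\delta([\yy_n],1)=[\yy_{n+1}]$, which — since the $[\yy_m]$ are eventually on the periodic cycle of a band, hence each is an $\omega$-state, hence an $\overline\eta$-state is not forced but $\overline\omega$ is — needs a short argument to relate $\delta([\yy_n],0)$ being an $\overline\eta$-state to the relevant summand being non-scattered. Concretely: when $\yy(n)$ is inverse, the ``right branch-off'' at level $n$ uses the $1$-edge which is the branch itself, so there is no sibling contribution at all at level $n$ regardless of whether $\delta([\yy_n],0)$ is an $\overline\eta$-state; hence such $n$ are harmless, and the only dangerous levels are those where $\yy(n)$ is direct. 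Combining, $(\infb,\xx]$ is scattered for a suitable $\xx$ iff there is $N$ beyond which every $n$ with $\delta([\yy_n],0)$ an $\overline\eta$-state has $\yy(n)$ direct, which is the claim. (The existence of a suitable right endpoint $\xx\in H_l(v)$ is automatic: any $\xx$ with $\infb<_l\xx$ works for the ``some $\xx$'' direction, and conversely the tail condition is independent of $\xx$.)

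**Main obstacle.** I expect the delicate point to be the bookkeeping in the ordered-sum decomposition of $(\infb,\xx]$ near the top — precisely pinning down which initial segment of branch-levels is constrained by the finite string $\xx$, and checking that this ``head'' is a scattered (indeed finite-to-$\omega$-indexed) sum so it can be harmlessly absorbed into the ``$\exists N$'' — together with the careful matching of inorder-direction (``branches off to the right of $\sgn(\yy)$'') with the sign convention $\sgn(\text{direct})=0$, $\sgn(\text{inverse})=1$, so that ``direct syllable $\yy(n)$'' correctly corresponds to ``the sibling subtree lies inorder-above the branch.'' Getting the direction right here is the crux; once the decomposition is set up correctly, the scatteredness conclusion is immediate from Proposition~\ref{prop: etabar and omegabar} and closure of scattered orders under scattered sums.
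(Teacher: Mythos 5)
Your overall strategy is the paper's: decompose a one-sided neighbourhood of the band point $\yy=\infb$ as an ordered sum, indexed by the branch-off levels, of linguages of the automaton restarted at the sibling states, then apply Proposition~\ref{prop: etabar and omegabar} together with eventual periodicity of the state sequence $[\yy_n]$. But you have carried this out on the wrong side of the band point, and the resulting condition is not the one in the lemma. \emph{Left} exceptionality is the scatteredness of $[\xx,\binf)$, i.e.\ of a final segment of $\{\zz\in H_l(v)\mid \zz<_l\yy\}$ --- the strings lying inorder \emph{below} the branch $\sgn(\yy)$. Such a string branches off at a level $n$ where the branch takes the $1$-edge (so $\yy(n)$ is \emph{inverse}) while the string takes the $0$-edge into the sibling state $\delta([\yy_n],0)$; hence the summand at level $n$ is (up to a final $\mbf 1$) the linguage rooted at $\delta([\yy_n],0)$, non-scattered exactly when that state is an $\overline\eta$-state, and the dangerous levels are those with $\yy(n)$ inverse and $\delta([\yy_n],0)$ an $\overline\eta$-state. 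This is precisely the contrapositive of the lemma's right-hand side. You instead decomposed $(\infb,\xx]$, the strings inorder \emph{above} the branch, which branch off at levels where $\yy(n)$ is direct into the $1$-sibling $\delta([\yy_n],1)$, and you correctly arrive at the condition ``for all large $n$, $\yy(n)$ direct $\implies\delta([\yy_n],1)$ not an $\overline\eta$-state.'' That is a characterization of (essentially) \emph{right} exceptionality, and it is a genuinely different condition: it constrains different levels and a different sibling state, and there is no symmetry forcing the two to coincide.

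The paragraph in which you try to convert your condition into the lemma's statement does not close this gap and contains the decisive error: you assert that ``when $\yy(n)$ is inverse \dots there is no sibling contribution at all at level $n$ regardless of whether $\delta([\yy_n],0)$ is an $\overline\eta$-state; hence such $n$ are harmless.'' For the interval relevant to left exceptionality the opposite is true --- the inverse levels are exactly the ones that contribute, via the $0$-sibling, and the $\overline\eta$-status of $\delta([\yy_n],0)$ there is exactly what decides scatteredness. (The sentence ``the automaton rooted at $\delta([\yy_n],1)$, which is a subtree of the one rooted at $\delta([\yy_n],0)$'' is also not meaningful: these are sibling states, neither contained in the other.) The fix is simply to redo your decomposition for $\{\zz\mid\zz<_l\yy\}$, matching the sign convention $\sgn(\mathrm{direct})=0\inorder 1=\sgn(\mathrm{inverse})$ as in the paper's proof; the rest of your argument (scattered sums over $\omega$, Proposition~\ref{prop: etabar and omegabar}, pigeonhole for the forward direction) then goes through. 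You should also make the forward direction explicit: if infinitely many dangerous levels exist, the corresponding non-scattered summands are pairwise disjoint and cofinal, so \emph{every} interval $[\zz,\yy)$ contains one and no choice of $\xx$ witnesses scatteredness; ``any $\xx$ works'' is not an adequate substitute for this.
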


\begin{proof}

$(\Rightarrow)$
Since the number of states in the automaton $M'_\Lambda(v)$ that computes the order type of $H_l(v)$ is finite, the pigeonhole principle yields $N, k\in\mathbb N$ such that $\yy(n) = \yy(n+k) $ and $ [\yy_n] = [\yy_{n + k}]$ for every $ n \ge N $.

To prove by contradiction that this $ N $ is as required, suppose there is $ n \ge N $ for which $\delta([\yy_n], 0) $ is an $ \overline \eta $-state but $ \beta :=\yy(n)\in Q_1^{-}$. Then there also exists $ \alpha \in Q_1 $ such that $\alpha\yy_n\in\St(\Lambda)$. Then for every $ m > n $, the set $ I_m $ of strings extending $ \alpha(\yy(n+k-1)\dots\yy(n+1)\beta)^m\yy_{n-1}$ on the left is a non-scattered interval in $ H_l(v) $ by Proposition~\ref{prop: etabar and omegabar}. These intervals $I_m$ are pairwise disjoint. Moreover, for any $ \zz \in H_l(v) $ with $ \zz <_l \yy$ in $\overline H_l(v)$ there exists $ m\in\mathbb N$ such that $ I_m\subseteq [\zz, \yy)$, and thus an embedding of $\eta$ in $[\zz,\yy)$-a contradiction to the hypothesis that $\bb$ is left exceptional.

$(\Leftarrow)$
\def\yym#1{\yy^-_{#1}}
Suppose that the given condition holds for $ N \in \N $. 
Let $ \lan m_n \mid n \in \N \ran $ be an increasing enumeration of the set $ \{m \in \N \mid \yy(m) $ is inverse$ \} $.

Visualizing $ H_l(v) $ as the binary tree formed by its image $ \sgn(H_l(v)) $ in $ \two^* $ under the map $ \sgn $, $ <_l $ as the inorder on this tree and $ \yy $ as an infinite branch in this tree, it is easy to see that $ \{\xx \in H_l(v) \mid \xx <_l \yy\} = \sum_{n\geq 1}I_n$, where $I_n := \{\xx \in H_l(v) \mid \xx = \xx'\yy_{m_n} \text{for some $ \xx' $ which does not have $ \yy(m_n) $ as its rightmost syllable} \}$. In fact, for each $ n $, we have $I_n=\begin{cases}
    H_l(\alpha_n \yy_{m_n}) + \mbf 1& \text{if there exists $ \alpha_n \in Q_1 $, necessarily unique, with $ \alpha_n \yy_{m_n} $ a string};\\
    \mbf 1 &\text{otherwise}.
\end{cases}$

It therefore suffices to show that there are only finitely many values of $ n $ for which $ I_n $ is non-scattered. For any such $ n $, we necessarily have $ I_n = H_l(\alpha_n\yy_{m_n}) + \mbf 1 \cong \lin(M'_\Lambda(\alpha_n\yy_{m_n})) + \mbf 1 $, where the isomorphism is by Remark~\ref{rem: gen hamm}. By Proposition~\ref{prop: etabar and omegabar}, the linguage $ \lin(M'_\Lambda(\alpha_n\yy_{m_n}))$ is non-scattered if and only if $ [\alpha_n\yy_{m_n}] = \deltahat(q_s, \sgn(\alpha_n\yy_{m_n})) = \delta(\deltahat(q_s, \sgn(\yy_{m_n})), \alpha_n) = \delta([\yy_{m_n}], 0) $ is an $ \overline\eta $-state in $ M'_\Lambda(v)$. Recall that for any $ n $, the syllable $ \yy(m_n) $ is inverse. Since $ \yy(m) $ is direct for every $ m \ge N $ with $ \delta([\yy_m], 0) $ an $ \overline\eta $-state, we have $ n \le m_n < N $. Thus there can only be finitely many such $ n $.
\end{proof}

The characterization of (left) exceptional bands provided by Lemma~\ref{lem: exceptional bands automatically} allows us to calculate their set algorithmically.

The sequence $ \lan([\yy_{N + j}], \sgn(\yy(N + j))) \mid j < k \ran $ is called the \emph{periodic part} of $ \yy=\infb $. It encodes the periodic sequence of transitions that the automaton $ M'_\Lambda(v) $ eventually enters while operating on the infinite input string $ \sgn(\yy) $. We are now ready to prove Theorem \ref{thm: finitely many exceptional bands}.

\begin{proof}[Proof of Theorem \ref{thm: finitely many exceptional bands}]
Because there are only finitely many pairs of the form $ (q, 0) $, where $ q $ is a state, it suffices to show that if $ \bb'_1, \bb'_2 $ are left exceptional bands whose periodic parts contain a common transition $ (q, 0) $, then they are identical. For $i=1,2$, let $\yy^i:=\infb_i$, where $\bb_i\in H_l(v)$ for appropriate cyclic permutations $\bb_i$ of $\bb'_i$. We use other notations similar to Lemma \ref{lem: exceptional bands automatically}.

Suppose $([\yy^1_{n_1}], \sgn(\yy^1(n_1))) = ([\yy^2_{n_2}], \sgn(\yy^2(n_2))) = (q, 0)$. For each $ n\in\mathbb N$ and $ i \in \{1, 2\}$, define $ p^i_n :=\sgn(\yy_i(n_i + n))$. We claim that $ p^1_n = p^2_n$ for every $ n $, from which the result follows.

Suppose the claim is false. Since $ p^1_0 = p^2_0 $, there exists a least positive $n\in\mathbb N$ for which $ p^1_{n + 1} \not= p^2_{n + 1} $.
Then $ q^1_{n_1 + n} = q^2_{n_2 + n} =: q' $ (say), and without loss of generality $ p^1_{n + 1} = 0 $ and $ p^2_{n +1} = 1 $. Since the periodic parts form cycles, it follows that $ q' $ is accessible from both $ q^1_{n_1 + n + 1} = \delta(q', 0) $ and $ q^2_{n_2 + n + 1} = \delta(q', 1)$ and hence an $\eta$-vertex, so $ \delta(q', 0) $ is an $ \overline\eta $-vertex. This is a contradiction to Lemma \ref{lem: exceptional bands automatically} applied to $\bb'_2$ at $ n_2 + n $, for $ \yy_2(n_2 + n) = 1 $ despite $ \delta(q^2_{n_2 + n}, 0) = \delta(q', 0) $ being an $ \overline\eta $-vertex.
\end{proof}

\begin{defn}
    
Let $ \mathcal{Q}'   $ be the quiver underlying the diagrammatic presentation of the automaton $ M(v) $, so that $ \mathcal{Q}' $ has $ \short(\Lambda) $ as its set of vertices and the graph of $ \delta $ as its set of arrows. Consider any edge $ ((q_1, d), q_2) $ to have source $ q_1 $, target $ q_2 $.

Let $ D:=\{((q_1, 1), q_2) \in\mathrm{Graph}(\delta) \mid  ((q_1, 0), q_3) \in\mathrm{Graph}(\delta)$ for some $ \overline\eta $-state $ q_3 \} $, and define $ \mathcal{Q}'' $ to be the subquiver of $ \mathcal{Q}' $ obtained by deleting the vertices corresponding to non-$ \overline\omega $-states, their incoming and outgoing arrows, and arrows in $ D $.
\end{defn}

The proof of Lemma~\ref{lem: exceptional bands automatically} characterises (left) exceptional bands for $ \Lambda $ as cycles in the quiver $ \mathcal{Q}'' $. Theorem~\ref{thm: E} uses the fact that any two distinct such cycles cannot share an arrow of the form $((q_1,0),q_2)$, thus bounding the number of exceptional bands by the number of such arrows. This naturally yields an algorithm to compute the set of (left) exceptional bands for $ \Lambda $.

\begin{rem}
\label{rem: E}
 The computation of $ \mathcal{Q'} $ takes $ O(\rmshort(\Lambda)) $ time and space, after which, the sets of $ \overline\eta $ and $ \overline\omega $-states can be computed in $ O(\rmshort(\Lambda)^w) $ time  by computing  the transitive closure of the arrow set of $ \mathcal Q' $, where $ w $ is the smallest real for which an $ O(n^w) $ time algorithm for  $ n \times n $ matrix multiplication is known.
Having thus constructed $ \mathcal{Q}'' $, it  suffices to compute the set of all cycles in this quiver. Since each cycle contains an arrow of the form $ ((q_1, 0), q_2) $ and each such arrow appears in at most one cycle, cycles in $ \mathcal{Q}'' $ are precisely the strongly connected components, which can be computed in $ O(\rmshort(\Lambda)) $ time by the Kosaraju-Sharir algorithm \cite{sharir1981strong}.
Thus the total time complexity of the algorithm is $ O(\rmshort(\Lambda)^w) $.
\end{rem}

This also implies that the number of  exceptional points, and hence the error term $ e $ in stable rank computation, is bounded above by the number of arrows in $ \mathcal{Q}'' $. In combination with the bound on $ \rmshort(\Lambda) $ given in Remark~\ref{rem: short}, this gives $ e < 2\cdot\rmshort(\Lambda) < \abs{Q_0}(2^{r+3} - 2) $.

\printbibliography
% \vspace{0.2in}
% \noindent{}Suyash Srivastava\\
% Indian Institute of Technology Kanpur\\
% Uttar Pradesh, India\\
% Email: \texttt{suyashsriv20@gmail.com}

% \vspace{0.2in}
% \noindent{}Corresponding Author: Amit Kuber\\
% Department of Mathematics and Statistics\\
% Indian Institute of Technology Kanpur\\
% Uttar Pradesh, India\\
% Email 1: \texttt{askuber@iitk.ac.in}\\
% Email 2: \texttt{expinfinity1@gmail.com}\\
% Phone: (+91) 512 259 6721\\
% Fax: (+91) 512 259 7500
\end{document}